\documentclass{article}

\usepackage{arxiv}

\usepackage[utf8]{inputenc} 
\usepackage[T1]{fontenc}    
\usepackage{hyperref}       
\usepackage{url}            
\usepackage{booktabs}       
\usepackage{amsfonts}       
\usepackage{nicefrac}       
\usepackage{microtype}      
\usepackage{lipsum}		
\usepackage{graphicx}
\usepackage{natbib}
\usepackage{doi}
\usepackage{amsmath,amssymb,amsthm}
\usepackage{geometry}
\usepackage{hyperref}
\usepackage{subcaption}
\usepackage{color}
\usepackage{float}

\newtheorem{assumption}{Assumption}[section]
\newtheorem{theorem}{Theorem}[section]
\newtheorem{lemma}[theorem]{Lemma}
\newtheorem{example}[theorem]{Example}

\title{Invariant measures of the stochastic theta method for stochastic differential equations with super-linearly growing coefficients}

\author{Xiaotong Li\\
	Department of Mathematics\\
	Jiangsu Second Normal University\\
	Nanjing, 210013, China \\
	\texttt{x.t.li@foxmail.com} \\
	\And
	Wei Liu\\
	Department of Mathematics\\
	Shanghai Normal University\\
	Shanghai, 200234, China \\
	\texttt{weiliu@shnu.edu.cn} \\
	\And
	Wenjie Xiao\thanks{Corresponding author} \\
	Department of Mathematics\\
	Shanghai Normal University\\
	Shanghai, 200234, China \\
	\texttt{wenjie20020618@163.com} \\
}

\renewcommand{\shorttitle}{Invariant measures of the SDE for stochastic theta method}

\hypersetup{
pdftitle={A template for the arxiv style},
pdfsubject={q-bio.NC, q-bio.QM},
pdfauthor={David S.~Hippocampus, Elias D.~Striatum},
pdfkeywords={First keyword, Second keyword, More},
}

\begin{document}
\maketitle

\begin{abstract}
The stochastic theta method is proposed to approximate invariant measures of stochastic differential equations (SDEs), both of whose drift and diffusion coefficients may grow super-linearly. For the numerical solution generated by the stochastic theta method, we show the existence and uniqueness of the numerical invariant measure first. Then, we prove that the numerical invariant measure is convergent to the exact invariant measure of the underlying SDE. We also provide some numerical simulations to illustrate our theoretical results. This work could be regarded as an extension of the results in [Y. Jiang et al, Numer. Algorithms 83(4)(2020), pp. 1531–1553] to the case of super-linearly growing diffusion coefficient. As the backward Euler-Maruyama (EM) method is a special case of the stochastic theta method, the results derived in this work could also be regarded as a generalization of the results for the backward EM method in [W. Liu et al. Appl. Numer. Math. 184(2023), pp. 137–150] to the stochastic theta method.
\end{abstract}

\keywords{stochastic differential equations \and stochastic theta method \and invariant measure \and super-linear coefficients}

\section{Introduction}\label{sec:intro}
Invariant measures of stochastic differential equations (SDEs) have many applications. For classical population models, invariant measures is related the persistency of species \cite{Allen2007,Allen2015}. For generative diffusion model, the invariant measure serves as the stable terminal distribution for the forward noising process and provides a well‑defined target distribution for the reverse generative process, ensuring theoretical stability and sampling consistency \cite{CaoTanGaoXuChenHangLi2024,Songetal2021}. Although invariant measures are of importance in many areas, their explicit forms are hardly obtained. Therefore, it is essential to select appropriate numerical methods to approximate the invariant measure and conduct rigorous numerical analysis. In this paper, we employ the stochastic theta method and explore its capability to approximate the invariant measure of SDEs with super-linearly growing coefficients.

\par
The stochastic theta method is a set of numerical methods for SDEs, since the classical Euler-Maruyama (EM) method can be derived by setting the theta to be zero and the backward EM method can be obtained by fixing the theta to be one \cite{Higham2000}.
\par
As one of the most popular numerical methods, the stochastic theta method has been broadly applied and studied. In the following, we first try to review existing literature about the stochastic theta method from different aspects. Then, the purpose of this paper and our main contributions are introduced.
\par
In the past several decades, the stochastic theta method has been applied to different types of SDEs. Higham et al. explored it for SDEs with Markovian switching \cite{HighamMaoYuan2007}. Huang applied it to stochastic delay differential equations \cite{Huang2014}. Li and Gan employed it for stochastic delay integro-differential equations \cite{LiGan2012}. Liu et al. studied it for delayed stochastic Hopfield neural networks \cite{LiuDengZhu2018}. Chen et al. investigated it for a class of SDEs with random periodic solutions \cite{ChenCaoChen2025}. Niu et al. discussed it for free stochastic differential equations \cite{NiuWeiYinZeng2025}. Yuan and Zhu used it for stochastic McKean-Vlasov equations \cite{YuanZhu2026}.
\par
When noises other than the standard Brownian motion are used to drive SDEs, the stochastic theta method still has its place in numerical approximations. Li et al. applied it to a class of SDEs driven by fractional Brownian motion \cite{LiHuHuangWang2023}. Wang et al. employed it for some SDEs driven by Poisson jump process \cite{WangChenNiuNiu2023}. Chen et al. studied it for SDEs driven by time-changed L\'evy noise \cite{ChenLiuWu2026}.
\par
The stochastic theta method is an implicit method, if the theta is not taken to be zero. One of the advantages that implicit methods naturally possess is that they are capable to handle SDEs with super-linear coefficients. For the topic of the finite time strong convergence, Wang et al. proved the mean-square convergence rate of the stochastic theta method for SDEs under a coupled monotonicity condition \cite{WangWuDong2020}. For the topic of the stability of the trivial solution, Zong and Wu studied the mean-square exponential stability of the stochastic theta method and revealed effects of different values of the theta on handling the super-linear coefficients. For the topic of the approximations to the exact invariant measures of SDEs, Jiang et al. used the stochastic theta method to numerically approximate the invariant measure of some SDE whose drift coefficient grows super-linearly but diffusion coefficient still satisfies the linear growth condition \cite{JiangWengLiu2020}.
\par
In this paper, we work a bit harder and manage to show that the numerical invariant measure generated by the stochastic theta method is still convergent to the exact one even when the diffusion coefficient grows super-linearly. So, the results obtained in this work could be regarded as non-trivial extension of those results in \cite{JiangWengLiu2020} to SDEs with both the drift and diffusion growing super-linearly. As mentioned above that the backward EM method is a special case of the stochastic theta method with the theta being one, the results obtained in this work could also be regarded as a generalization of the results of the backward EM method in \cite{LiuMaoWu2023}. It should be mentioned that techniques neither in \cite{JiangWengLiu2020} or \cite{LiuMaoWu2023} could be straightly adapted in this work to obtain our results. We need to carefully deal with the remained explicit part in the stochastic theta method when the theta taking values less than one. 
\par
Before we close Section \ref{sec:intro}, it should be noted that various other numerical methods, apart from the stochastic theta method, were also employed to approximate the exact invariant measures of SDEs. We just mention some of them here \cite{BaoShaoYuan2016,LiMaoYin2019,LiuLiu2025,PangWangWu2024} and refer the readers to the references therein. 
\par
The rest of the paper is organized as follows. In section \ref{sec:preliminary}, the mathematical preparations are presented and the stochastic theta method is revisited. The main results are stated and proved in Section \ref{sec:results}. The numerical examples are provided in Section \ref{sec:numexp}. Conclusions, discussions and future research are put in Section \ref{sec:conclusion}.

\section{Mathematical preliminary}\label{sec:preliminary}
In this section, we give some basic settings, notations and useful lemma for the rest of the paper.

Throughout this paper, we work with the complete probability space $(\Omega, \mathcal{F}, \mathbb{P})$. Let $|a|$ denote the Euclidean norm if $a$ is a vector and the Hilbert-Schmidt norm if $a$ is a matrix. The inner product of vectors $a$ and $b$ in the Euclidean space is denoted by $\langle a,b\rangle$. The lager one between $a$ and $b$ is denoted by $a\vee b$, and the smaller one is denoted by $a\wedge b$. For a vector or matrix $a$, $a^{\mathrm{T}}$ denotes its transpose. Let $\mathbb{I}_A$ be the indicator function for set $A$. The family of all probability measures on
$\mathbb{R}^n$ is denoted by $\mathcal{P}(\mathbb{R}^n)$. Let $\mathcal{B}(\mathbb{R}^n)$ denote the family of all Borel sets in $\mathbb{R}^n$. The $p$-Wasserstein distance between $u,v\in\mathcal{P}(\mathbb{R}^n)$ for any $p\in(0,1)$ is defined by 
\begin{equation*}
{W}_p(\mu_1,\mu_2)=\inf_{\nu\in\mathcal{C}(\mu_1,\mu_2)}\int_{\mathbb{R}^n\times\mathbb{R}^n}|X-Y|^p\nu(dX,dY),
\end{equation*}
where $\mathcal{C}(\mu_1,\mu_2)$ denotes the set of all couplings of $\mu_1$ and $\mu_2$.

Given a stochastic process $X(t)$ on $(\Omega, \mathcal{F}, \mathbb{P})$, denote the transition probability kernel of $X(t)$ by $\mathbb{P}_t(x,B)$ for any $t>0$ and any $B\in\mathcal{B}$. We use $\delta_x$ to emphasise the initial value $x$ and denote $\mathbb{P}_t(x,\cdot)$ by $\delta_x\mathbb{P}_t$. A probability measure $\pi(\cdot)\in\mathcal{P}(\mathbb{R}^n)$ is called an invariant measure of $X(t)$, if 
\begin{equation*}
\pi(B)=\int_{\mathbb{R}^n}\mathbb{P}_t(x,B)\pi(dx)
\end{equation*}
holds for any $t>0$ and any $B\in\mathcal{B}(\mathbb{R}^n)$.

Let $W(t)$ be a $d$-dimensional Brownian motion defined on $(\Omega,\mathcal{F},\mathbb{P})$. We consider the $n$-dimensional SDE of the It\^o type
\begin{equation}\label{eq:SDE}
dX(t)=b(X(t))dt+g(X(t))dW(t)
\end{equation}
with initial value $X(0)=x_0\in\mathbb{R}^n$, where $b:\mathbb{R}^n\rightarrow \mathbb{R}^n$ and $g:\mathbb{R}^n\rightarrow\mathbb{R}^{n\times d}$. We impose several assumptions on $b$ and $g$ as follows.
\begin{assumption}\label{assum:2.1}
There exists a pair of constant $L>0$ and $q>1$ such that 
\begin{equation*}
	|b(x_1)-b(x_2)|\vee |g(x_1)-g(x_2)|\leq L(1+|x_1|^{q-1}+|x_2|^{q-1})|x_1-x_2|
\end{equation*}
for all $x_1,x_2\in\mathbb{R}^d$.
\end{assumption}
\begin{assumption}\label{assum:2.3}
There exist constants $l_1\geq 1$ and $\beta > 0$ such that
\[2\langle x_1-x_2, b(x_1)-b(x_2)\rangle +l_1|g(x_1)-g(x_2)|^2 \le -\beta|x_1-x_2|^2
\]
for all $x_1,x_2\in\mathbb{R}^d$.
\end{assumption}
\begin{assumption}\label{assum:2.2}
There exist constants $l_2\geq 1$,$c_1 > 0$ and $\alpha > 0$ such that
\[2\langle x, b(x)\rangle + l_2|g(x)|^2 \le c_1 -\alpha|x|^2
\]
for all $x \in \mathbb{R}^d$.
\end{assumption}

Assumption \ref{assum:2.1} together with Assumption \ref{assum:2.3} guarantees the existence and uniqueness of the solution to \eqref{eq:SDE}, see, e.g., \cite[Theorem~3.4]{Mao2007}.

For a given time step $h\in(0,1)$, the stochastic theta method to SDE \eqref{eq:SDE} is defined by 
\begin{equation}\label{eq:NumSDE}
X_{k+1}=X_k+\theta b(X_{k+1})h+(1-\theta)b(X_k)h+g(X_k)\Delta W_k, \quad X_0=X(0)=x_0,
\end{equation}
where $\theta\in(1/2,1]$ and $\Delta W_k=W(t_{k+1})-W(t_k)$ is the Brownian motion increment with $t_k=kh$, for $k=1,2,\cdots$.

To investigate the invariant measure of the numerical solution, we introduce the following notation. For any $k\in\mathbb{N}$ and $B\in\mathcal{B}(\mathbb{R}^n)$, let $\hat{\mathbb{P}}_k^h(x,B)$ be the transition probability kernel of $
X_k$. A probability measure $\hat{\pi}(\cdot)\in\mathcal{P}(\mathbb{R}^n)$ is called an invariant measure of $X_k$, if 
\begin{equation*}
\hat{\pi}(B)=\int_{\mathbb{R}^n}\hat{\mathbb{P}}_k^h(x,B)\hat{\pi}(dx)
\end{equation*}
holds for any $k\in\mathbb{N}$ and $B\in\mathcal{B}(\mathbb{R}^n)$.

The implementation of \eqref{eq:NumSDE} requires solving a nonlinear equation at each iteration. The well-posedness of the difference equation \eqref{eq:NumSDE} is proved in the next lemma. The proof is analogous to that of \cite{JiangWengLiu2020} and is therefore omitted.

\begin{lemma}
Let Assumption \ref{assum:2.3} holds. For any $h\in(0,1)$ and $\theta\in(1/2,1]$, then the stochastic theta method \eqref{eq:NumSDE} is well defined.
\end{lemma}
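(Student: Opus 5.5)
Well-posedness means that, given $X_k$ and $\Delta W_k$, the implicit equation \eqref{eq:NumSDE} has a unique solution $X_{k+1}$ that depends measurably on the data. Write the scheme as
\[
F(x) := x-\theta h\, b(x) = X_k+(1-\theta)b(X_k)h+g(X_k)\Delta W_k =: y .
\]
The task is then to show that $F:\mathbb{R}^n\to\mathbb{R}^n$ is a bijection with a Borel (indeed continuous) inverse. The plan is to apply the uniform monotonicity theorem (Minty--Browder, or Zeidler's theorem for continuous strongly monotone operators on $\mathbb{R}^n$), as in \cite{JiangWengLiu2020}.

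First, $F$ is strongly monotone. By Assumption \ref{assum:2.3}, since $l_1|g(x_1)-g(x_2)|^2\ge 0$, we have
\[
\langle x_1-x_2,b(x_1)-b(x_2)\rangle\le -\tfrac{\beta}{2}|x_1-x_2|^2 .
\]
Hence, for $\theta\in(1/2,1]$ and $h\in(0,1)$,
\[
\langle x_1-x_2,F(x_1)-F(x_2)\rangle \ge \Bigl(1+\tfrac{\theta h\beta}{2}\Bigr)|x_1-x_2|^2\ge |x_1-x_2|^2 .
\]
No step-size restriction is needed, because the drift is dissipative in the one-sided sense.

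Second, $F$ is continuous. This needs continuity of $b$, which follows from the local Lipschitz bound in Assumption \ref{assum:2.1}. The lemma only cites Assumption \ref{assum:2.3}, so I would either note that $b$ is continuous as part of the standing setting, or invoke Assumption \ref{assum:2.1} implicitly as the paper does elsewhere.

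Third, a continuous, strongly monotone map on $\mathbb{R}^n$ is coercive: $\langle F(x),x\rangle/|x|\to\infty$. The Browder--Minty theorem then gives surjectivity. Injectivity is immediate from the monotonicity inequality, since $F(x_1)=F(x_2)$ forces $|x_1-x_2|^2\le 0$. The same inequality and Cauchy--Schwarz give
\[
|F^{-1}(y_1)-F^{-1}(y_2)|\le |y_1-y_2|,
\]
so $F^{-1}$ is Lipschitz and in particular Borel. Consequently $X_{k+1}=F^{-1}(y)$ is $\mathcal{F}_{t_{k+1}}$-measurable, and the sequence $\{X_k\}$ is well defined by induction, starting from $X_0=x_0$.

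The main obstacle is the surjectivity step. If one prefers to avoid citing Browder--Minty, the fallback is Brouwer's fixed point theorem. Coercivity gives $\langle F(x)-y,x\rangle>0$ on a sphere of sufficiently large radius, and the acute-angle lemma (a consequence of Brouwer) then yields a zero of $F-y$ inside the ball.
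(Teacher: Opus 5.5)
Your proposal is correct and follows essentially the route the paper defers to in \cite{JiangWengLiu2020}. That route is strong monotonicity of $x\mapsto x-\theta h\, b(x)$ from Assumption \ref{assum:2.3}, together with continuity of $b$ and the uniform monotonicity (Browder--Minty) theorem, which give a bijection with Lipschitz inverse and hence measurability of $X_{k+1}$. Your remark that continuity of $b$ must be borrowed from Assumption \ref{assum:2.1} is well taken: strong monotonicity alone does not give surjectivity, so the lemma as literally stated under Assumption \ref{assum:2.3} only needs that extra hypothesis.
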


\section{Main results}\label{sec:results}
In this section, the existence and uniqueness of the invariant measure of the numerical solution generated by the stochastic theta method is proved. Furthermore, the convergence of the numerical invariant measure to the underlying counterpart is discussed.

To obtain the existence and uniqueness of the invariant measure, we need the following two auxiliary lemmas.

\begin{lemma}\label{lem:3.1}
Let Assumption \ref{assum:2.2} holds. Define 
\[
\rho_0 = \left\{ \dfrac{\alpha}{2}\wedge \dfrac{2\theta - 1}{2\theta^2} \right\}.
\]
For any $h\in(0,1)$ satisfying $0 < h < 1/\rho_0$, the numerical solution \eqref{eq:NumSDE} obeys
\[
\mathbb{E}|X_k|^2 \le C_1,
\]
where $C_1 > 0$ is a constant.
\end{lemma}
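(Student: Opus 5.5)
Following the standard technique for the stochastic theta method, we will work with the auxiliary quantity $Y_k := X_k - \theta b(X_k)h$. It is designed so that the scheme \eqref{eq:NumSDE} becomes
\[
Y_{k+1} = X_k + (1-\theta) b(X_k)h + g(X_k)\Delta W_k = Y_k + b(X_k)h + g(X_k)\Delta W_k .
\]
The plan is to derive a one-step contraction for $\mathbb{E}|Y_k|^2$ and then transfer the resulting uniform bound back to $X_k$.

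\textbf{Step 1: relate $|Y_k|^2$ and $|X_k|^2$.} Expanding the definition gives
\[
|Y_k|^2 = |X_k|^2 - 2\theta h\langle X_k, b(X_k)\rangle + \theta^2 h^2 |b(X_k)|^2 .
\]
Assumption \ref{assum:2.2}, together with $l_2\ge 1$ and $|g|^2\ge0$, yields $-2\langle x,b(x)\rangle \ge \alpha|x|^2 - c_1$. Hence
\[
|Y_k|^2 \ge (1+\alpha\theta h)|X_k|^2 - c_1\theta h .
\]
This lower bound is what will let us pass from a bound on $Y_k$ to a bound on $X_k$.

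\textbf{Step 2: one-step recursion for $\mathbb{E}|Y_k|^2$.} Square the identity for $Y_{k+1}$ and take expectations. Since $\Delta W_k$ is independent of $\mathcal{F}_{t_k}$, the cross terms containing $\Delta W_k$ vanish, and
\[
\mathbb{E}|Y_{k+1}|^2 = \mathbb{E}\Big[|Y_k|^2 + 2h\langle Y_k, b(X_k)\rangle + h^2|b(X_k)|^2 + h|g(X_k)|^2\Big].
\]
Substituting $Y_k = X_k - \theta b(X_k)h$ in the middle term gives
\[
2h\langle Y_k,b\rangle + h^2|b|^2 = 2h\langle X_k,b\rangle - (2\theta-1)h^2|b|^2 .
\]
The term $-(2\theta-1)h^2|b(X_k)|^2$ is nonpositive because $\theta>1/2$; this is exactly where the restriction on $\theta$ enters. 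Dropping it and applying Assumption \ref{assum:2.2} (using $l_2\ge1$) gives
\[
\mathbb{E}|Y_{k+1}|^2 \le \mathbb{E}|Y_k|^2 + c_1 h - \alpha h\,\mathbb{E}|X_k|^2 .
\]

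\textbf{Step 3: close the recursion in $Y$ (the main obstacle).} We must bound $-\alpha h|X_k|^2$ above by $-\lambda h|Y_k|^2$, which requires an upper bound on $|Y_k|^2$ in terms of $|X_k|^2$. The difficulty is that $|Y_k|^2$ contains $\theta^2h^2|b|^2$, and $b$ grows super-linearly. We will therefore not discard the $(2\theta-1)h^2|b|^2$ term in Step 2 but use it to absorb $|b|^2$. By Young's inequality,
\[
|Y_k|^2 \le 2|X_k|^2 + 2\theta^2 h^2|b(X_k)|^2 .
\]
Hence, for $\lambda>0$,
\[
-\alpha h|X|^2 - (2\theta-1)h^2|b|^2 \le -\lambda h |Y|^2
\]
holds provided $2\lambda\le\alpha$ and $2\lambda\theta^2 h \le (2\theta-1)h$. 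Both hold with $\lambda=\rho_0$, which is precisely where $\rho_0=\tfrac{\alpha}{2}\wedge\tfrac{2\theta-1}{2\theta^2}$ comes from. (If the constant turns out to need a different choice, we would instead split $-\alpha h|X|^2$ in half and use Step 1, but the Young-inequality route matches $\rho_0$.) This gives
\[
\mathbb{E}|Y_{k+1}|^2 \le (1-\rho_0 h)\,\mathbb{E}|Y_k|^2 + c_1 h ,
\]
with $0<1-\rho_0h<1$ by the assumption $h<1/\rho_0$.

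\textbf{Step 4: iterate and transfer back.} Iterating the recursion gives
\[
\mathbb{E}|Y_k|^2 \le |Y_0|^2 + \frac{c_1}{\rho_0},
\]
where $|Y_0|^2 = |x_0-\theta b(x_0)h|^2$ is bounded uniformly in $h\in(0,1)$. Step 1 then gives
\[
\mathbb{E}|X_k|^2 \le \mathbb{E}|Y_k|^2 + c_1 ,
\]
which is the required bound with a constant $C_1$ independent of $k$.
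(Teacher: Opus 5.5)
Your proposal is correct and follows the paper's proof essentially step for step: the same auxiliary variable $X_k-\theta h b(X_k)$ with $Y_{k+1}=Y_k+hb(X_k)+g(X_k)\Delta W_k$, the same retention of $-(2\theta-1)h^2|b(X_k)|^2$ to absorb the $2\theta^2h^2|b(X_k)|^2$ coming from $|Y_k|^2\le 2|X_k|^2+2\theta^2h^2|b(X_k)|^2$ (with $\lambda=\rho_0$ and $h<1$), the same geometric iteration, and the same lower bound $|Y_k|^2\ge(1+\alpha\theta h)|X_k|^2-c_1\theta h$ to return to $X_k$. The only slip is your parenthetical fallback (splitting $-\alpha h|X_k|^2$ and invoking Step 1): it could not close the recursion, because Step 1 bounds $|Y_k|^2$ from below rather than above, but your main route never needs it.
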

\begin{proof}
Define $ M_k = X_k - \theta h b(X_k) $,  
\[
\begin{aligned}
	M_{k+1} &= X_{k+1} - \theta h b(X_{k+1}) \\
	&= X_k + (1+\theta)h b(X_k) + g(X_k)\Delta W_k - \theta h b(X_{k+1}) \\
	&= X_k + h b(X_k) - \theta h b(X_k) + \theta h b(X_k) + g(X_k)\Delta W_k - \theta h b(X_{k+1}) \\
	&= M_k + h b(X_k) + g(X_k)\Delta W_k.
\end{aligned}
\]  
Squaring both sides yields  
\[
\begin{aligned}
	|M_{k+1}|^2 &= |M_k + h b(X_k) + g(X_k)\Delta W_k|^2 \\
	&= |M_k|^2 + h^2 |b(X_k)|^2 + |g(X_k)\Delta W_k|^2 + 2\langle M_k, h b(X_k) \rangle \\ &\quad + 2\langle M_k, g(X_k)\Delta W_k \rangle + 2\langle h b(X_k), g(X_k)\Delta W_k \rangle.
\end{aligned}
\]  
Taking expectation,  
\[
\mathbb{E}|M_{k+1}|^2 = \mathbb{E}|M_k|^2 + h^2 \mathbb{E}|b(X_k)|^2 + h \mathbb{E}|g(X_k)|^2 + 2h \mathbb{E}\langle M_k, b(X_k) \rangle.
\]  
Using $ M_k = X_k - \theta h b(X_k) $, we have  
\[
\begin{aligned}
	2h \mathbb{E}\langle M_k, b(X_k) \rangle &= 2h \mathbb{E}\langle X_k - \theta h b(X_k), b(X_k) \rangle \\
	&= 2h \mathbb{E}\langle X_k, b(X_k) \rangle - 2\theta h^2 \mathbb{E}|b(X_k)|^2.
\end{aligned}
\]  
Thus,  
\[
\begin{aligned}
	\mathbb{E}|M_{k+1}|^2 &= \mathbb{E}|M_k|^2 + h^2 \mathbb{E}|b(X_k)|^2 + h \mathbb{E}|g(X_k)|^2 + 2h \mathbb{E}\langle X_k, b(X_k) \rangle - 2\theta h^2 \mathbb{E}|b(X_k)|^2 \\
	&= \mathbb{E}|M_k|^2 + 2h \mathbb{E}\langle X_k, b(X_k) \rangle + h \mathbb{E}|g(X_k)|^2 - (2\theta - 1)h^2 \mathbb{E}|b(X_k)|^2.
\end{aligned}
\]  
By Assumption \ref{assum:2.2}, we have  
\[
\begin{aligned}
	2\langle X_k, b(X_k) \rangle + |g(X_k)|^2 &= 2\langle X_k, b(X_k) \rangle + l_2 |g(X_k)|^2 - (l_2 - 1)|g(X_k)|^2 \\
	&\leq c_1 - \alpha |X_k|^2 - (l_2 - 1)|g(X_k)|^2.
\end{aligned}
\]  
Therefore,  
\[
\begin{aligned}
	\mathbb{E}|M_{k+1}|^2 &\leq \mathbb{E}|M_k|^2 + h\left(c_1 - \alpha |X_k|^2 - (l_2 - 1)|g(X_k)|^2\right) - (2\theta - 1)h^2 \mathbb{E}|b(X_k)|^2 \\
	&= \mathbb{E}|M_k|^2 + c_1 h - \alpha h \mathbb{E}|X_k|^2 - h(l_2 - 1)\mathbb{E}|g(X_k)|^2 - (2\theta - 1)h^2 \mathbb{E}|b(X_k)|^2.
\end{aligned}
\]  
Since $ l_2 \geq 1 $, we have $ -h(l_2 - 1)\mathbb{E}|g(X_k)|^2 \leq 0 $, so  
\[
\mathbb{E}|M_{k+1}|^2 \leq \mathbb{E}|M_k|^2 + c_1 h - \alpha h \mathbb{E}|X_k|^2 - (2\theta - 1)h^2 \mathbb{E}|b(X_k)|^2.
\]  
Let $\theta > \frac{1}{2}$ and define  
\[
\rho_0 = \left\{\dfrac{\alpha}{2} \wedge \dfrac{2\theta - 1}{2\theta^2}\right\}.
\]  
Since $\theta > \frac{1}{2}$, we have $\frac{2\theta - 1}{2\theta^2} > 0$, and $\alpha > 0$ by Assumption 2.2, so $\rho_0 > 0$. This choice of $\rho_0$ ensures the inequalities $2\rho_0 \leq \alpha$ and $2\rho_0\theta^2 \leq 2\theta - 1$ hold.

We assume the stepsize $h$ satisfies  
\[
0 < \rho_0 h < 1,
\]  
which is feasible because $\rho_0 > 0$ and $h$ can be chosen sufficiently small. This guarantees $0 < 1 - \rho_0 h < 1$, a critical condition for convergence of the geometric series in the stability analysis.

Now, from $ M_k = X_k - \theta h b(X_k) $, we derive  
\[
|M_k|^2 = |X_k - \theta h b(X_k)|^2 \leq 2|X_k|^2 + 2\theta^2 h^2 |b(X_k)|^2 
\]  
Multiplying both sides by $ \rho_0 h $, we get  
\[
\rho_0 h |M_k|^2 \leq 2\rho_0 h |X_k|^2 + 2\rho_0 \theta^2 h^3 |b(X_k)|^2.
\]  
By the definition of $ \rho_0 $ and $h\in(0,1)$:  
\begin{itemize}
	\item $ 2\rho_0 \leq \alpha \implies 2\rho_0 h |X_k|^2 \leq \alpha h |X_k|^2 $,  
	\item $ 2\rho_0 \theta^2 h^2 \leq (2\theta - 1)h^2 \implies 2\rho_0 \theta^2 h^3 |b(X_k)|^2 \leq (2\theta - 1)h^2 |b(X_k)|^2 $,  
\end{itemize}
so  
\[
\rho_0 h |M_k|^2 \leq \alpha h |X_k|^2 + (2\theta - 1)h^2 |b(X_k)|^2.
\]  
Substituting back into the expression for $ \mathbb{E}|M_{k+1}|^2 $,  
\[
\begin{aligned}
	\mathbb{E}|M_{k+1}|^2 &\leq \mathbb{E}|M_k|^2 + c_1 h - \alpha h |X_k|^2 - (2\theta - 1)h^2 |b(X_k)|^2 \\
	&= (1-\rho_0 h)\mathbb{E}|M_k|^2 + \rho_0 h\mathbb{E}|M_k|^2 + c_1 h - \alpha h |X_k|^2 - (2\theta - 1)h^2 |b(X_k)|^2\ \\
	&\leq (1 - \rho_0 h) \mathbb{E}|M_k|^2 + c_1 h.
\end{aligned}
\]  
Iterating this inequality,  
\[
\begin{aligned}
	\mathbb{E}|M_{k+1}|^2 &\leq (1 - \rho_0 h) \mathbb{E}|M_k|^2 + c_1 h \\
	&\leq (1 - \rho_0 h)^2 \mathbb{E}|M_{k-1}|^2 + c_1 h (1 - \rho_0 h) + c_1 h \\
	&\leq \cdots \\
	&\leq (1 - \rho_0 h)^{k+1} \mathbb{E}|M_0|^2 + c_1 h \sum_{j=0}^k (1 - \rho_0 h)^j.
\end{aligned}
\]  
Since $ 0 < 1 - \rho_0 h < 1 $ (by the stepsize condition $ 0 < \rho_0 h < 1 $), the geometric series converges:  
\[
\sum_{j=0}^k (1 - \rho_0 h)^j = \frac{1 - (1 - \rho_0 h)^{k+1}}{\rho_0 h} \leq \frac{1}{\rho_0 h}.
\]  
Thus, for any $k \geq 0$, we have  
\[
\mathbb{E}|M_k|^2 \leq (1 - \rho_0 h)^k \mathbb{E}|M_0|^2 + \frac{c_1}{\rho_0}.
\]  
Noting $ M_0 = X_0 - \theta h b(X_0) $, we obtain  
\[
\mathbb{E}|M_k|^2 \leq (1 - \rho_0 h)^k \mathbb{E}|X_0 - \theta h b(X_0)|^2 + \frac{c_1}{\rho_0}.
\]  
From the definition of $ M_k $,  
\[
|M_k|^2 = |X_k - \theta h b(X_k)|^2 = |X_k|^2 - 2\theta h \langle X_k, b(X_k) \rangle + \theta^2 h^2 |b(X_k)|^2.
\]  
Using Assumption \ref{assum:2.2} again, $ 2\langle X_k, b(X_k) \rangle \leq c_1 - \alpha |X_k|^2 - l_2 |g(X_k)|^2 \leq c_1 - \alpha |X_k|^2 $, so  
\[
\begin{aligned}
	|M_k|^2 &\geq |X_k|^2 - \theta h (c_1 - \alpha |X_k|^2) + \theta^2 h^2 |b(X_k)|^2 \\
	&\geq (1 + \theta \alpha h)|X_k|^2 - c_1 \theta h.
\end{aligned}
\]  
Taking expectations and rearranging,  
\[
(1 + \theta \alpha h) \mathbb{E}|X_k|^2 - c_1 \theta h \leq \mathbb{E}|M_k|^2 \leq (1 - \rho_0 h)^k \mathbb{E}|X_0 - \theta h b(X_0)|^2 + \frac{c_1}{\rho_0}.
\]  
Therefore,  
\[
\mathbb{E}|X_k|^2 \leq \frac{(1 - \rho_0 h)^k}{1 + \theta \alpha h} \mathbb{E}|X_0 - \theta h b(X_0)|^2 + \frac{c_1}{\rho_0 (1 + \theta \alpha h)} + \frac{c_1 \theta h}{1 + \theta \alpha h}.
\]  
So under Assumption \ref{assum:2.2} and stepsize condition $ 0 < \rho_0 h < 1 $, then the solution obeys
\[
\mathbb{E}|X_k|^2 \leq C_1,
\]
where $C_1 > 0$ is a constant .
\end{proof}
\begin{lemma}\label{lem:3.2}
Assume that Assumptions \ref{assum:2.1} and \ref{assum:2.3} hold. Let $\{X_k\}$ and $\{Y_k\}$ be two numerical solutions of the stochastic theta method \eqref{eq:NumSDE} with different initial values $x_0$ and $y_0$. For any $h\in(0,1)$ satisfying $0 < h < 1/\tilde{\rho}_0$, then
\[
\mathbb{E}|X_k-Y_k|^2 \leq e^{-\tilde{\rho}_0 h k}(1+C_2) |X_0 - Y_0|^2,
\]
where $\tilde{\rho}_0 = \left\{\dfrac{\beta}{2} \wedge \dfrac{2\theta-1}{2\theta^2}\right\}$ and $C_2 > 0$ is a constant.
\end{lemma}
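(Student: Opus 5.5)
We mirror the proof of Lemma \ref{lem:3.1}, now applied to the difference of two numerical solutions. Set $E_k = X_k - Y_k$, $\Delta b_k = b(X_k)-b(Y_k)$, $\Delta g_k = g(X_k)-g(Y_k)$, and define the auxiliary variable
\[
N_k = E_k - \theta h\,\Delta b_k .
\]
Subtracting the two schemes \eqref{eq:NumSDE} gives, exactly as for $M_k$ in Lemma \ref{lem:3.1},
\[
N_{k+1} = N_k + h\,\Delta b_k + \Delta g_k\,\Delta W_k .
\]
We square this identity and take expectations. Since $\Delta W_k$ is independent of $\mathcal{F}_{t_k}$ and has mean zero, the cross terms containing $\Delta W_k$ vanish and $\mathbb{E}|\Delta g_k\Delta W_k|^2 = h\,\mathbb{E}|\Delta g_k|^2$. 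Expanding $2h\langle N_k,\Delta b_k\rangle$ through the definition of $N_k$ then yields
\[
\mathbb{E}|N_{k+1}|^2 = \mathbb{E}|N_k|^2 + h\,\mathbb{E}\big(2\langle E_k,\Delta b_k\rangle + |\Delta g_k|^2\big) - (2\theta-1)h^2\,\mathbb{E}|\Delta b_k|^2 .
\]
By Assumption \ref{assum:2.3} with $l_1\ge 1$, the middle term is at most $-\beta h\,\mathbb{E}|E_k|^2$.

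Next we close the recursion. From $|N_k|^2\le 2|E_k|^2+2\theta^2h^2|\Delta b_k|^2$ and the choice of $\tilde\rho_0$, which gives $2\tilde\rho_0\le\beta$ and $2\tilde\rho_0\theta^2\le 2\theta-1$, together with $h<1$, we get
\[
\tilde\rho_0 h|N_k|^2\le \beta h|E_k|^2+(2\theta-1)h^2|\Delta b_k|^2 .
\]
Hence $\mathbb{E}|N_{k+1}|^2\le(1-\tilde\rho_0 h)\,\mathbb{E}|N_k|^2$. This recursion has no additive constant, so iterating and using $1-x\le e^{-x}$ gives
\[
\mathbb{E}|N_k|^2\le e^{-\tilde\rho_0 hk}\,|N_0|^2 .
\]

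It remains to transfer the bound back to $E_k$ at both ends. For the lower bound, we expand $|N_k|^2 = |E_k|^2 - 2\theta h\langle E_k,\Delta b_k\rangle + \theta^2h^2|\Delta b_k|^2$. Assumption \ref{assum:2.3} gives $-2\langle E_k,\Delta b_k\rangle\ge \beta|E_k|^2\ge 0$, so $|N_k|^2\ge(1+\theta\beta h)|E_k|^2\ge|E_k|^2$. For the upper bound at $k=0$, the initial values are deterministic, so
\[
|N_0|^2\le 2|x_0-y_0|^2+2\theta^2h^2|b(x_0)-b(y_0)|^2 .
\]
By Assumption \ref{assum:2.1},
\[
|b(x_0)-b(y_0)|\le L\big(1+|x_0|^{q-1}+|y_0|^{q-1}\big)|x_0-y_0| .
\]
Since $h<1$, this gives $|N_0|^2\le(1+C_2)|x_0-y_0|^2$ after adjusting constants, with $C_2$ depending on $L,q,\theta$ and the initial values, for instance $C_2 = 1+6\theta^2L^2(1+|x_0|^{2(q-1)}+|y_0|^{2(q-1)})$. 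If one wants the prefactor to be exactly $1+C_2$, one can instead bound $|N_0|^2\le(1+\epsilon)|E_0|^2+(1+\epsilon^{-1})\theta^2h^2|\Delta b_0|^2$ and absorb the constants.

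The main obstacle is this last step. Because the coefficients grow super-linearly, $C_2$ cannot be made uniform in the initial data. It must be allowed to depend on $|x_0|,|y_0|$, which is fine for the subsequent uniqueness argument, where the initial data are fixed or lie in a bounded set. Two further points need care. First, the $\mathcal{F}_{t_k}$-measurability of $N_k$, $\Delta b_k$ and $\Delta g_k$, which justifies dropping the cross terms; this follows from the well-posedness lemma, since $X_{k+1}$ is a measurable function of $X_k$ and $\Delta W_k$. Second, the finiteness of $\mathbb{E}|\Delta b_k|^2$. If the latter is an issue, one can argue with truncation or note that the inequalities hold pathwise before expectations are taken, and the $h^2|\Delta b_k|^2$ terms enter with favorable sign.
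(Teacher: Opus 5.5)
Your proof is correct and is essentially the paper's argument. Your $N_k$ and $E_k$ are the paper's $R_k$ and $D_k$, and you derive the same one-step contraction $\mathbb{E}|N_{k+1}|^2\le(1-\tilde\rho_0 h)\,\mathbb{E}|N_k|^2$ and the same lower bound $|N_k|^2\ge(1+\theta\beta h)|E_k|^2$, using Assumption~\ref{assum:2.1} only to bound $|N_0|^2$. Your explicit $C_2$, depending on $|x_0|$ and $|y_0|$, makes precise a step the paper leaves vague. However, your aside that this dependence is harmless downstream is too quick. In the proof of Theorem~\ref{thm:1} the bound is integrated against $\hat{\mathbb{P}}_l^h(x_0,dy_0)$, which has unbounded support, so one needs $\sup_l\mathbb{E}|X_l|^{pq}<\infty$; Lemma~\ref{lem:3.1} supplies this directly only when $pq\le 2$.
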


\begin{proof}
Define $D_k = X_k - Y_k$ and $R_k = D_k - \theta h[b(X_k) - b(Y_k)]$. The sequences $\{X_k\}$ and $\{Y_k\}$ satisfy
\begin{align*}
	X_{k+1} &= X_k + \theta h\, b(X_{k+1}) + (1-\theta)h\, b(X_k) + g(X_k)\,\Delta W_k, \\
	Y_{k+1} &= Y_k + \theta h\, b(Y_{k+1}) + (1-\theta)h\, b(Y_k) + g(Y_k)\,\Delta W_k,
\end{align*}
where $\Delta W_k \sim \mathcal{N}(0,h)$ is the Brownian increment. Subtracting the two equations gives
\[
D_{k+1} = D_k + \theta h\big[b(X_{k+1})-b(Y_{k+1})\big] + (1-\theta)h\big[b(X_k)-b(Y_k)\big] + \big[g(X_k)-g(Y_k)\big]\Delta W_k.
\]
Hence, by the definition of $R_{k+1}$, we have
\[
R_{k+1} = D_{k+1} - \theta h\big[b(X_{k+1})-b(Y_{k+1})\big] = D_k + (1-\theta)h\big[b(X_k)-b(Y_k)\big] + \big[g(X_k)-g(Y_k)\big]\Delta W_k,
\]
which can be rewritten as
\[
R_{k+1} = R_k + h\big[b(X_k)-b(Y_k)\big] + \big[g(X_k)-g(Y_k)\big]\Delta W_k.
\]
Squaring both sides yields and taking expectation , we obtain
\[
\mathbb{E}|R_{k+1}|^2 = \mathbb{E}|R_k|^2 + 2h\,\mathbb{E}\langle R_k,\, b(X_k)-b(Y_k)\rangle + h^2\,\mathbb{E}|b(X_k)-b(Y_k)|^2 + h\,\mathbb{E}|g(X_k)-g(Y_k)|^2.
\]
Substituting $R_k = D_k - \theta h[b(X_k)-b(Y_k)]$ into the inner product term yields:
\[
\langle R_k,\, b(X_k)-b(Y_k)\rangle = \langle D_k,\, b(X_k)-b(Y_k)\rangle - \theta h\,|b(X_k)-b(Y_k)|^2.
\]
By Assumption \ref{assum:2.3},
\begin{align*}
	2\langle D_k, b(X_k) - b(Y_k) \rangle + l_1 |g(X_k) - g(Y_k)|^2 \leq -\beta |D_k|^2,
\end{align*}
which implies
\begin{align*}
	2\langle D_k, b(X_k) - b(Y_k) \rangle \leq -\beta |D_k|^2 - l_1 |g(X_k) - g(Y_k)|^2.
\end{align*}
Substitute this inequality:
\begin{align*}
	\mathbb{E}|R_{k+1}|^2 &\leq \mathbb{E}|R_k|^2 + h\mathbb{E}|g(X_k) - g(Y_k)|^2 + h\left(-\beta\mathbb{E}|D_k|^2 - l_1\mathbb{E}|g(X_k) - g(Y_k)|^2\right) \\
	&\quad + (1-2\theta)h^2\mathbb{E}|b(X_k) - b(Y_k)|^2 \\
	&= \mathbb{E}|R_k|^2 - \beta h\mathbb{E}|D_k|^2 - (l_1-1)h\mathbb{E}|g(X_k) - g(Y_k)|^2 \\
	&\quad + (1-2\theta)h^2\mathbb{E}|b(X_k) - b(Y_k)|^2.
\end{align*}
Since $l_1 \geq 1$ , we get $-(l_1-1)h\mathbb{E}|g(X_k) - g(Y_k)|^2 \leq 0$. 
Therefore,
\[
\mathbb{E}|R_{k+1}|^2 \leq \mathbb{E}|R_k|^2 - \beta h\mathbb{E}|D_k|^2 + (1-2\theta)h^2\mathbb{E}|b(X_k) - b(Y_k)|^2.
\]
Now define
\[
\tilde{\rho}_0 = \left\{\frac{\beta}{2} \wedge \frac{2\theta-1}{2\theta^2}\right\}.
\]
Since $\theta \in (\frac{1}{2},1]$ and $\beta > 0$, we have $\tilde{\rho}_0 > 0$ and $0 < \tilde{\rho}_0 h < 1$ for sufficiently small $h$. From the definition of $\tilde{\rho}_0$, we have $2\tilde{\rho}_0 \leq \beta$ and $2\tilde{\rho}_0 \theta^2 \leq 2\theta - 1$. Using the inequality $|R_k|^2 \leq 2|D_k|^2 + 2\theta^2 h^2 |b(X_k)-b(Y_k)|^2$, we get
\[
\tilde{\rho}_0 h\,|R_k|^2 \leq 2\tilde{\rho}_0 h\,|D_k|^2 + 2\tilde{\rho}_0 \theta^2 h^3\,|b(X_k)-b(Y_k)|^2 \leq \beta h\,|D_k|^2 + (2\theta-1)h^2\,|b(X_k)-b(Y_k)|^2.
\]
Rearranging this yields
\[
\beta h\,|D_k|^2 \geq \tilde{\rho}_0 h\,|R_k|^2 - (2\theta-1)h^2\,|b(X_k)-b(Y_k)|^2.
\]
Substituting this lower bound into the inequality for $\mathbb{E}|R_{k+1}|^2$:
\begin{align*}
	\mathbb{E}|R_{k+1}|^2 &\leq \mathbb{E}|R_k|^2 - \tilde{\rho}_0 h\,\mathbb{E}|R_k|^2 + (2\theta-1)h^2\,\mathbb{E}|b(X_k)-b(Y_k)|^2 + (1-2\theta)h^2\,\mathbb{E}|b(X_k)-b(Y_k)|^2 \\
	&= (1-\tilde{\rho}_0 h)\,\mathbb{E}|R_k|^2.
\end{align*}
Iterating this inequality from step $0$ to $k$:
\[
\mathbb{E}|R_k|^2 \leq (1-\tilde{\rho}_0 h)^k\,\mathbb{E}|R_0|^2.
\]
Using the elementary inequality $(1-x)^k \leq e^{-xk}$ for $x \in (0,1)$, we obtain
\[
\mathbb{E}|R_k|^2 \leq e^{-\tilde{\rho}_0 h k}\,\mathbb{E}|R_0|^2.
\]
Noting $R_0 = D_0 - \theta h[b(x_0) - b(y_0)] = (x_0 - y_0) - \theta h[b(x_0) - b(y_0)]$, we obtain
\[
\mathbb{E}|R_k|^2 \leq e^{-\tilde{\rho}_0 h k} \mathbb{E}\left|(x_0 - y_0) - \theta h[b(x_0) - b(y_0)]\right|^2.
\]
Retracing $D_k$, by Assumption \ref{assum:2.3}, we have
\begin{align*}
	|R_k|^2 &= |D_k - \theta h[b(X_k) - b(Y_k)]|^2 \\
	&= |D_k|^2 - 2\theta h\langle D_k, b(X_k) - b(Y_k) \rangle + \theta^2 h^2 |b(X_k) - b(Y_k)|^2 \\
	&\geq |D_k|^2 + \beta\theta h|D_k|^2 \\
	&= (1 + \beta\theta h)|D_k|^2.
\end{align*}
Therefore,
\begin{align*}
	(1 + \beta \theta h)\mathbb{E} |D_k|^2 &\leq \mathbb{E}|R_k|^2 \leq e^{-\tilde{\rho}_0 h k} \mathbb{E}\left[ (x_0 - y_0) - \theta h (b(x_0) - b(y_0)) \right]^2 \\
	\implies \mathbb{E}|D_k|^2 &\leq \frac{e^{-\tilde{\rho}_0 h k}}{(1 + \beta \theta h)} \mathbb{E}\left[ (x_0 - y_0) - \theta h (b(x_0) - b(y_0)) \right]^2.
\end{align*}
Noting $D_k = X_k - Y_k$, we obtain 
\begin{align*}
	\mathbb{E}|X_k - Y_k|^2 = e^{-\tilde{\rho}_0 h k} \mathbb{E}\left[ (x_0 - y_0) - \theta h (b(x_0) - b(y_0)) \right]^2 
	\leq e^{-\tilde{\rho}_0 h k}(1 + C_2) |x_0 - y_0|^2.
\end{align*}
So under Assumption \ref{assum:2.3} and stepsize condition $ 0 < \tilde{\rho}_0 h < 1 $, then the solution obeys
\begin{align*}
	\mathbb{E}|X_k - Y_k|^2 \leq e^{-\tilde{\rho}_0 h k} (1 + C_2) |x_0 - y_0|^2,
\end{align*}
where $\tilde{\rho}_0 = \left\{\dfrac{\beta}{2}\wedge \dfrac{2\theta-1}{2\theta^2}\right\}$ and $C_2 > 0$ is a constant.
\end{proof}

Now, we are ready to present the main result on the existence and uniqueness of the numerical invariant measure.
\begin{theorem}\label{thm:1}
Let Assumptions \ref{assum:2.1} , \ref{assum:2.3}and \ref{assum:2.2} hold, for any $h \in (0, 1)$ satisfying
\[
h < h^* := \frac{1}{\rho_0} \wedge \frac{1}{\tilde{\rho}_0},
\]
where $\tilde{\rho}_0 = \left\{\dfrac{\beta}{2} \wedge \dfrac{2\theta-1}{2\theta^2}\right\}$ and 
$\rho_0 = \left\{ \dfrac{\alpha}{2}\wedge \dfrac{2\theta - 1}{2\theta^2} \right\}$,
the stochastic theta method \eqref{eq:NumSDE} converges in the $p$-Wasserstein distance to a unique invariant measure $\widehat{\pi} \in \mathcal{P}(\mathbb{R}^d)$ with some exponential rate $\xi_1 > 0$ for any $p \in (0, 1)$.
\end{theorem}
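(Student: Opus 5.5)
We follow the standard route used for the backward EM method in \cite{LiuMaoWu2023} and for the theta method in \cite{JiangWengLiu2020}: a Cauchy-sequence argument for $\delta_{x_0}\hat{\mathbb{P}}^h_k$ in $W_p$, built from the uniform moment bound of Lemma \ref{lem:3.1} and the contraction estimate of Lemma \ref{lem:3.2}. Throughout we take $h<h^*$, so both lemmas apply. Since $p\in(0,1)$, the map $(x,y)\mapsto|x-y|^p$ is a metric cost, and $W_p$ is a genuine metric on measures with finite $p$-th moment. By Jensen's inequality and Lemma \ref{lem:3.2},
\[
W_p(\delta_{x}\hat{\mathbb{P}}^h_k,\delta_{y}\hat{\mathbb{P}}^h_k)\le \mathbb{E}|X_k^{x}-X_k^{y}|^p\le \big(\mathbb{E}|X_k^{x}-X_k^{y}|^2\big)^{p/2}\le (1+C_2)^{p/2}|x-y|^p e^{-\tilde{\rho}_0 p hk/2}.
\]
Here we use the synchronous coupling, in which both chains are driven by the same $\Delta W_k$. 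We assume the constant $C_2$ in Lemma \ref{lem:3.2} is uniform in the initial points, as stated there; this is where the super-linear growth of $b$ could enter, so we treat it as given.

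\emph{Step 1 (Cauchy property).} Fix $x_0$. For $k,j\ge0$, the Markov property gives $\delta_{x_0}\hat{\mathbb{P}}^h_{k+j}=\int\delta_{y}\hat{\mathbb{P}}^h_k\,(\delta_{x_0}\hat{\mathbb{P}}^h_j)(dy)$. Conditioning on $X_j=y$, we couple the chain started at $y$ with the chain started at $x_0$, again using the same future Brownian increments. Combining the display above with Lemma \ref{lem:3.1} yields
\[
W_p(\delta_{x_0}\hat{\mathbb{P}}^h_{k+j},\delta_{x_0}\hat{\mathbb{P}}^h_k)\le (1+C_2)^{p/2}e^{-\tilde{\rho}_0 phk/2}\,\mathbb{E}|X_j-x_0|^p\le (1+C_2)^{p/2}\big(C_1^{p/2}+|x_0|^p\big)e^{-\tilde{\rho}_0 phk/2}.
\]
Hence $\{\delta_{x_0}\hat{\mathbb{P}}^h_k\}_k$ is Cauchy in $(\mathcal{P}_p,W_p)$. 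This space is complete, so the sequence converges to some $\hat{\pi}$ with rate $\xi_1=\tilde{\rho}_0 p/2$ per unit time. Lemma \ref{lem:3.1} together with Fatou's lemma gives $\int|x|^2\hat\pi(dx)\le C_1$.

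\emph{Step 2 (invariance and independence of $x_0$).} By the contraction display, the limit does not depend on $x_0$. To get invariance, we first need $\hat{\mathbb{P}}^h_1$ to be $W_p$-continuous. Integrating the one-step contraction against an optimal coupling gives
\[
W_p(\mu\hat{\mathbb{P}}^h_1,\nu\hat{\mathbb{P}}^h_1)\le (1+C_2)^{p/2}W_p(\mu,\nu).
\]
Passing to the limit in $\delta_{x_0}\hat{\mathbb{P}}^h_{k+1}=(\delta_{x_0}\hat{\mathbb{P}}^h_k)\hat{\mathbb{P}}^h_1$ then yields $\hat\pi\hat{\mathbb{P}}^h_1=\hat\pi$, and invariance for every $k$ follows by iteration.

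\emph{Step 3 (uniqueness).} Suppose $\pi'$ is another invariant measure. Then
\[
W_p(\hat\pi,\pi')=W_p(\hat\pi\hat{\mathbb{P}}^h_k,\pi'\hat{\mathbb{P}}^h_k)\le (1+C_2)^{p/2}e^{-\tilde\rho_0phk/2}\int\!\!\int|x-y|^p\,\hat\pi(dx)\pi'(dy)\to0,
\]
provided $\pi'$ has a finite $p$-th moment. If we do not want to assume this, we restrict uniqueness to $\mathcal{P}_p$, or we truncate: the bound holds on compact sets, and we let the truncation level grow.

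The main technical obstacle is Step 1, namely justifying the conditional coupling across time $j$. Lemma \ref{lem:3.2} is stated for deterministic initial values, so we must use that $X_j$ is $\mathcal{F}_{t_j}$-measurable while $\{\Delta W_i\}_{i\ge j}$ is independent of $\mathcal{F}_{t_j}$, and apply Lemma \ref{lem:3.2} conditionally, relying on the uniformity of $C_2$. A secondary concern is whether $C_2$ really is uniform in the initial points. If it is not, we would replace $(1+C_2)|x_0-y_0|^2$ with the explicit quantity $|x_0-y_0-\theta h(b(x_0)-b(y_0))|^2$. That quantity is controlled by Assumption \ref{assum:2.1} together with a higher moment bound on $X_j$, which would then require an extension of Lemma \ref{lem:3.1} to moments of order $q$.
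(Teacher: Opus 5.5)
Your proposal is correct and follows essentially the paper's route: the synchronous-coupling contraction of Lemma~\ref{lem:3.2}, combined with the uniform bound of Lemma~\ref{lem:3.1}, gives the Cauchy estimate $W_p(\delta_{x_0}\hat{\mathbb{P}}^h_k,\delta_{x_0}\hat{\mathbb{P}}^h_{k+l})\le K e^{-p\tilde{\rho}_0 kh/2}$, and the limit, its exponential rate, invariance and uniqueness all follow from it. The paper differs only in extracting the limit through tightness and a weakly convergent subsequence rather than completeness of $(\mathcal{P}_p,W_p)$, in asserting invariance rather than verifying it, and in treating $C_2$ as independent of the initial points just as you do, so your caveat applies equally to it (and in Step~1 it can be removed without higher moments, since the proof of Lemma~\ref{lem:3.1} already bounds $\mathbb{E}|X_j-\theta h b(X_j)|^2$ uniformly in $j$, which is exactly what controls $|x_0-y_0-\theta h(b(x_0)-b(y_0))|^p$ once $y_0$ is replaced by $X_j$).
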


\begin{proof}
Due to the Chebyshev inequality and Lemma \ref{lem:3.1}, for any initial value $x_0 \in \mathbb{R}^n$, we obtain that the sequence $\{\delta_{x_0} \hat{\mathbb{P}}_k^h\}_{k \geq 1}$ is tight, where $\delta_{x_0} \hat{\mathbb{P}}_k^h = \hat{\mathbb{P}}_k^h(x_0, \cdot)$ denotes the probability measure of $X_k$ starting from $x_0$. Then, a subsequence that converges weakly to an invariant measure $\hat{\pi} \in \mathcal{P}(\mathbb{R}^n)$ can be extracted. By Lemma \ref{lem:3.2}, under the stepsize condition $0 < h < h^*$, we have
\[
\mathbb{E}|X_k - Y_k|^2 \leq (1 + C_2)|x_0 - y_0|^2 e^{-\tilde{\rho}_0 h k}.
\]
By the H\"older inequality and the definition of the $p$-Wasserstein distance, for any $p \in (0, 1)$,
\begin{equation}\label{eq:contraction}
	W_p(\delta_{x_0} \hat{\mathbb{P}}_k^h, \delta_{y_0} \hat{\mathbb{P}}_k^h)
	\leq \mathbb{E}|X_k - Y_k|^p
	\leq \left(\mathbb{E}|X_k - Y_k|^2\right)^{p/2}
	\leq (1 + C_2)^{p/2} |x_0 - y_0|^p e^{-p \tilde{\xi}_1 k h},
\end{equation}
where $\tilde{\xi}_1 = \tilde{\rho}_0 / 2$.

Thanks to Lemma \ref{lem:3.1} and the Kolmogorov-Chapman equation, for any $k, l > 0$ and $p \in (0, 1)$, we have
\[
\begin{aligned}
	W_p(\delta_{x_0} \hat{\mathbb{P}}_k^h, \delta_{x_0} \hat{\mathbb{P}}_{(k+l)}^h)
	&= W_p(\delta_{x_0} \hat{\mathbb{P}}_k^h, \delta_{x_0} \hat{\mathbb{P}}_k^h \hat{\mathbb{P}}_l^h) \\
	&\leq \int_{\mathbb{R}^n} W_p(\delta_{x_0} \hat{\mathbb{P}}_k^h, \delta_{y_0} \hat{\mathbb{P}}_k^h) \hat{\mathbb{P}}_l^h(x_0, dy_0) \\
	&\leq \int_{\mathbb{R}^n} (1 + C_2)^{p/2} |x_0 - y_0|^p e^{-p \tilde{\xi}_1 k h} \hat{\mathbb{P}}_l^h(x_0, dy_0) \\
	&\leq 2^{p/2} (1 + C_2)^{p/2} \left(|x_0|^p + \mathbb{E}|X_l|^p\right) e^{-p \tilde{\xi}_1 k h} \\
	&\leq 2^{p/2} (1 + C_2)^{p/2} \left(|x_0|^p + (\mathbb{E}|X_l|^2)^{p/2}\right) e^{-p \tilde{\xi}_1 k h}.
\end{aligned}
\]
By Lemma \ref{lem:3.1}, $\mathbb{E}|X_l|^2 \leq C_1$ for all $l \geq 0$. Therefore,
\begin{equation}\label{eq:cauchy}
	W_p(\delta_{x_0} \hat{\mathbb{P}}_k^h, \delta_{x_0} \hat{\mathbb{P}}_{(k+l)}^h) \leq K(p) e^{-p \tilde{\xi}_1 k h},
\end{equation}
where $K(p) := 2^{p/2}(1 + C_2)^{p/2}\left(|x_0|^p + C_1^{p/2}\right)$. Now, letting $l \to \infty$ in \eqref{eq:cauchy}, since $\delta_{x_0} \hat{\mathbb{P}}_{(k+l)}^h$ converges weakly to $\hat{\pi}$, we have
\begin{equation}\label{eq:convergence}
	W_p(\delta_{x_0} \hat{\mathbb{P}}_k^h, \hat{\pi}) \leq K(p) e^{-p \tilde{\xi}_1 k h}.
\end{equation}
Moreover, we have $W_p(\delta_{x_0} \hat{\mathbb{P}}_k^h, \hat{\pi}) \to 0$ as $k \to \infty$, which guarantees that $\hat{\pi}$ is an invariant measure of $\{\delta_{x_0} \hat{\mathbb{P}}_k^h\}$.

Now, assume that $\hat{\pi}_1 \in \mathcal{P}(\mathbb{R}^n)$ is an invariant measure of $X_k$ with initial value $x_0$ and $\hat{\pi}_2 \in \mathcal{P}(\mathbb{R}^n)$ is an invariant measure of $X_k$ with initial value $y_0$. We can see that
\[
W_p(\hat{\pi}_1, \hat{\pi}_2) \leq \int_{\mathbb{R}^n \times \mathbb{R}^n} W_p(\delta_{x_0} \hat{\mathbb{P}}_k^h, \delta_{y_0} \hat{\mathbb{P}}_k^h) \nu(dx_0, dy_0),
\]
for any $x_0,y_0\in\mathbb{R}^n$ with $x_0\neq y_0$, and $\nu$ is a  coupling of $\hat{\pi}_1$ and $\hat{\pi}_2$. Consequently, it follows from \eqref{eq:contraction} that the stochastic theta method admits a unique invariant measure.
\end{proof}

Motivated by the argument in \cite{BaoShaoYuan2016}, we derive the following theorem concerning the invariant measure of the underlying equation. Since the proof follows the same line of reasoning as that in \cite{BaoShaoYuan2016}, we omit the proof.

\begin{theorem}\label{thm:2}
Suppose that Assumptions \ref{assum:2.1}--\ref{assum:2.2} hold. Then the solution of \eqref{eq:SDE} converges in the $p$-Wasserstein distance to a unique invariant measure $\pi$ with some exponential rate $\xi_2>0$.
\end{theorem}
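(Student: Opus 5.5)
We prove Theorem \ref{thm:2} by the continuous-time analogue of the proof of Theorem \ref{thm:1}. There are two ingredients: a uniform-in-time second moment bound, and an exponential contraction between two solutions started from different points. These are combined by the tightness and Cauchy argument in $W_p$.

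\textbf{Uniform moment bound.} Apply It\^o's formula to $e^{\alpha t}|X(t)|^2$ and use Assumption \ref{assum:2.2}. Since $l_2\ge 1$, we get
\[
d\bigl(e^{\alpha t}|X(t)|^2\bigr)\le e^{\alpha t}c_1\,dt+dM(t),
\]
where $M$ is a local martingale. Localize with the stopping times $\tau_R=\inf\{t:|X(t)|\ge R\}$; these are finite-time well defined by \cite[Theorem~3.4]{Mao2007} under Assumptions \ref{assum:2.1} and \ref{assum:2.3}. Taking expectations and letting $R\to\infty$ by Fatou's lemma gives
\[
\mathbb{E}|X(t)|^2\le e^{-\alpha t}|x_0|^2+c_1/\alpha.
\]
Chebyshev's inequality then shows that $\{\delta_{x_0}\mathbb{P}_t\}_{t\ge0}$ is tight.

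\textbf{Contraction.} Let $X$ and $Y$ start from $x_0$ and $y_0$ and be driven by the same Brownian motion. Apply It\^o's formula to $e^{\beta t}|X(t)-Y(t)|^2$ and use Assumption \ref{assum:2.3} with $l_1\ge1$. The drift term is then nonpositive. After the same localization this yields
\[
\mathbb{E}|X(t)-Y(t)|^2\le e^{-\beta t}|x_0-y_0|^2.
\]
By H\"older's inequality,
\[
W_p(\delta_{x_0}\mathbb{P}_t,\delta_{y_0}\mathbb{P}_t)\le |x_0-y_0|^p e^{-p\beta t/2}.
\]

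\textbf{Combination.} Use the Chapman--Kolmogorov equation $\delta_{x_0}\mathbb{P}_{t+s}=\int \delta_{y}\mathbb{P}_t\,\mathbb{P}_s(x_0,dy)$ together with convexity of $W_p$ under mixtures, which holds because for $p\in(0,1)$ it is a transport cost with the concave cost $|x-y|^p$. Then
\[
W_p(\delta_{x_0}\mathbb{P}_t,\delta_{x_0}\mathbb{P}_{t+s})\le \bigl(|x_0|^p+(\mathbb{E}|X(s)|^2)^{p/2}\bigr)e^{-p\beta t/2}\le K e^{-p\beta t/2},
\]
uniformly in $s$. Hence $\{\delta_{x_0}\mathbb{P}_t\}$ is Cauchy in $W_p$. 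The space $(\mathcal{P}_p,W_p)$ is complete, and the limit is also the weak limit provided by tightness, so the family converges to some $\pi$ at rate $\xi_2=p\beta/2$.

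\textbf{Invariance and uniqueness.} For invariance, use the Feller property: continuous dependence on the initial value follows from the contraction estimate. Then
\[
\pi\mathbb{P}_s=\lim_{t\to\infty}\delta_{x_0}\mathbb{P}_{t+s}=\pi.
\]
For uniqueness, take two invariant measures $\pi_1,\pi_2$ and an optimal coupling $\nu$ of them. Then
\[
W_p(\pi_1,\pi_2)\le \int |x-y|^p\,\nu(dx,dy)\,e^{-p\beta t/2}\to0.
\]
This requires $\int|x|^p\,\pi_i(dx)<\infty$. That finiteness follows for any invariant measure by integrating the moment bound from the first step against $\pi_i$, using truncation and monotone convergence.

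The main obstacle is the rigorous localization in the It\^o arguments under super-linearly growing $g$, i.e.\ justifying that the stochastic integrals are true martingales up to $\tau_R$ and passing to the limit $R\to\infty$. The rest mirrors the proof of Theorem \ref{thm:1} and \cite{BaoShaoYuan2016}.
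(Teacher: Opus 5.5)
Your proposal is correct and follows the route the paper intends for this omitted proof. That route is the Bao--Shao--Yuan argument, i.e.\ the continuous-time version of the proof of Theorem~\ref{thm:1}: a uniform second-moment bound from Assumption~\ref{assum:2.2}, a synchronous-coupling contraction from Assumption~\ref{assum:2.3}, a Cauchy argument in $W_p$, then invariance and uniqueness, where your Feller argument and finite-moment check for arbitrary invariant measures are more careful than the corresponding steps in the paper's proof of Theorem~\ref{thm:1}. One small correction to a justification: mixture convexity of $W_p$ holds for any transport cost, since a mixture of couplings is a coupling of the mixtures; the concavity of $|x-y|^p$ is what makes $W_p$ a metric for $p\in(0,1)$, not what makes it convex.
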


We now state the convergence result of the numerical invariant measure to its underlying counterpart.

\begin{theorem}\label{thm:3}
Suppose that Assumptions 2.1, 2.2 and 2.3 hold.  Then the numerical invariant measure $\hat{\pi}$ converges to the underlying invariant measure $\pi$ in the $p$-Wasserstein distance, that is, for any $h \in (0, h^*)$, we have
\[
W_p(\hat{\pi}, \pi) \leq Ch^{p/2}.
\]
\end{theorem}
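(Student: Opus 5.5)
The plan is to use the standard triangle-inequality decomposition for $W_p$ with $p\in(0,1)$, where $W_p$ is a genuine metric. Fix an initial value $x_0$ and a time $t=kh$. Then
\[
W_p(\hat{\pi},\pi)\le W_p(\hat{\pi},\delta_{x_0}\hat{\mathbb{P}}_k^h)+W_p(\delta_{x_0}\hat{\mathbb{P}}_k^h,\delta_{x_0}\mathbb{P}_{kh})+W_p(\delta_{x_0}\mathbb{P}_{kh},\pi).
\]
The first term is bounded by \eqref{eq:convergence} in the proof of Theorem \ref{thm:1}, namely by $K(p)e^{-p\tilde{\xi}_1 kh}$. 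The third term is bounded by $Ce^{-\xi_2 kh}$ by Theorem \ref{thm:2}. Both vanish as $k\to\infty$. The whole difficulty is therefore in the middle term, which has to be controlled \emph{uniformly in $k$} by $Ch^{p/2}$. Once that is done, letting $k\to\infty$ gives $W_p(\hat\pi,\pi)\le Ch^{p/2}$.

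For the middle term, I would couple the two processes through the same Brownian motion. This gives $W_p\le \mathbb{E}|X(t_k)-X_k|^p\le (\mathbb{E}|X(t_k)-X_k|^2)^{p/2}$, so it suffices to prove a uniform-in-time mean-square error bound
\[
\sup_{k\ge0}\mathbb{E}|X(t_k)-X_k|^2\le Ch.
\]
I would set $e_k=X(t_k)-X_k$ and mimic the proof of Lemma \ref{lem:3.2}. The natural quantity is $\tilde R_k=e_k-\theta h[b(X(t_k))-b(X_k)]$. Writing
\[
X(t_{k+1})=X(t_k)+\theta h b(X(t_{k+1}))+(1-\theta)hb(X(t_k))+g(X(t_k))\Delta W_k+r_k,
\]
the local defect $r_k$ collects $\int_{t_k}^{t_{k+1}}(b(X(s))-\cdot)\,ds$ and $\int_{t_k}^{t_{k+1}}(g(X(s))-g(X(t_k)))\,dW(s)$. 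Then $\tilde R_{k+1}=\tilde R_k+h[b(X(t_k))-b(X_k)]+[g(X(t_k))-g(X_k)]\Delta W_k+r_k$.

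Squaring and using Assumption \ref{assum:2.3} exactly as in Lemma \ref{lem:3.2} should give
\[
\mathbb{E}|\tilde R_{k+1}|^2\le(1-\tilde\rho_0 h+\varepsilon h)\mathbb{E}|\tilde R_k|^2+C_\varepsilon h^{-1}\mathbb{E}|\mathbb{E}(r_k|\mathcal{F}_{t_k})|^2+C\mathbb{E}|r_k|^2 .
\]
The slack $l_1>1$, or a small portion of $\beta$, absorbs the cross terms between the diffusion differences and the martingale part of $r_k$. I would then use Young's inequality with small $\varepsilon<\tilde\rho_0$. Iterating with a geometric sum gives $\sup_k\mathbb{E}|\tilde R_k|^2\le C h^{-1}\cdot(\text{local error})$. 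The estimate $(1+\beta\theta h)|e_k|^2\le|\tilde R_k|^2$ then transfers the bound to $e_k$.

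The main obstacle is bounding the local errors uniformly in time, with $\mathbb{E}|r_k|^2\le Ch^2$ and $\mathbb{E}|\mathbb{E}(r_k|\mathcal{F}_{t_k})|^2\le Ch^3$, or at least $Ch^2$ in a form giving order $h$ overall. With super-linear coefficients, Assumption \ref{assum:2.1} gives $|b(X(s))-b(X(t_k))|^2\le C(1+|X(s)|^{2q-2}+|X(t_k)|^{2q-2})|X(s)-X(t_k)|^2$. This forces uniform-in-time higher moments $\sup_t\mathbb{E}|X(t)|^{r}<\infty$ for the exact solution, plus the Hölder-type estimate $\mathbb{E}|X(s)-X(t_k)|^{r}\le Ch^{r/2}$. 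I would derive these from Assumption \ref{assum:2.2} via Itô's formula for $|x|^r$, where the $l_2$ margin controls $|g|^2$ terms in the higher moments. The risk is that $l_2\ge1$ only yields moments up to order about $l_2+1$, so I may need $l_2$ sufficiently large relative to $q$ as a hidden requirement.

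A crucial feature of this approach is that all super-linear factors are evaluated along the \emph{exact} solution. The numerical solution enters only through the monotone difference handled by Assumption \ref{assum:2.3}, so no high moments of $X_k$ beyond Lemma \ref{lem:3.1} are needed. With $\mathbb{E}|\mathbb{E}(r_k|\mathcal{F}_{t_k})|^2\le Ch^2$ and $\mathbb{E}|r_k|^2\le Ch^2$, the recursion yields $\sup_k\mathbb{E}|e_k|^2\le Ch$. This gives the stated rate $h^{p/2}$.
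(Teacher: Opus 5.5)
Your decomposition and your handling of the first and third terms match the paper's. The two arguments differ only in the middle term, and there your route is the more careful one.

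The paper bounds $\mathbb{E}|X(t_k)-X_k|^2\le Ch$ by quoting the finite-time strong convergence result of \cite{WangWuDong2020}, and then lets $k\to\infty$. That limit is legitimate only if $C$ does not depend on the horizon $t_k=kh$, and a finite-horizon estimate of that kind does not guarantee this. So, as written, the paper's proof does not actually establish the uniform-in-$k$ bound it uses.

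Your error recursion for $\tilde R_k=e_k-\theta h[b(X(t_k))-b(X_k)]$ reuses the contraction mechanism of Lemma \ref{lem:3.2} and puts every super-linear factor on the exact solution. That is exactly what supplies $\sup_k\mathbb{E}|e_k|^2\le Ch$. The price is proving uniform-in-time moment and H\"older bounds for $X(t)$.

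You are right that this needs more than the stated hypotheses. H\"older's inequality with Assumption \ref{assum:2.1} calls for $\sup_t\mathbb{E}|X(t)|^r<\infty$ with $r$ about $4q-2$. It\^o's formula for $|x|^r$ closes under Assumption \ref{assum:2.2} only for $r\le l_2+1$. The paper's citation tacitly needs hypotheses of the same kind.

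Two steps in your sketch need repair.

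First, the closing arithmetic is wrong. With the forcing term $C_\varepsilon h^{-1}\mathbb{E}|\mathbb{E}(r_k|\mathcal{F}_{t_k})|^2$, a bound $Ch^2$ gives $O(h)$ per step, and the geometric sum then yields only $\sup_k\mathbb{E}|e_k|^2\le C$. You need $Ch^3$, and it comes for free:
the stochastic part of $r_k$ has zero conditional mean;
the drift part $\int_{t_k}^{t_{k+1}}[\theta(b(X(s))-b(X(t_{k+1})))+(1-\theta)(b(X(s))-b(X(t_k)))]ds$ has $L^2$-norm $O(h^{3/2})$ by the H\"older estimate;
conditional expectation is an $L^2$-contraction.
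No smoothness of $b$ is required.

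Second, the cross term $2\mathbb{E}\langle [g(X(t_k))-g(X_k)]\Delta W_k, r_k\rangle$ leaves $\delta h\,\mathbb{E}|g(X(t_k))-g(X_k)|^2$. This can only be absorbed by the term $-(l_1-1)h\,\mathbb{E}|g(X(t_k))-g(X_k)|^2$, which Lemma \ref{lem:3.2} discards and you would have to keep. A portion of $\beta$ cannot do the job, because under super-linear growth $|g(x)-g(y)|^2$ is not dominated by $C|x-y|^2$. So, as you have set things up, you need $l_1>1$ strictly, which Assumption \ref{assum:2.3} does not impose.

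Both extra conditions hold in the paper's examples ($l_1=4$, $l_2=9$, $q=3$, so $l_2+1=4q-2$).
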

\begin{proof}
Thanks to the triangle inequality, we have
\[
W_p(\hat{\pi}, \pi) 
\leq W_p\bigl(\hat{\pi}, \delta_{x_0} \hat{\mathbb{P}}_k^h)
+ W_p\bigl(\delta_{x_0} \mathbb{P}_{k}, \pi\bigr) 
+ W_p\bigl(\delta_{x_0} \mathbb{P}_{k}, \delta_{x_0} \hat{\mathbb{P}}_{k}^h\bigr).
\]

It follows from Theorems \ref{thm:1} and \ref{thm:2} that $\hat{\mathbb{P}}_{k}^h$ and $\mathbb{P}_{k}^h$ converge to $\hat{\pi}$ and $\pi$, respectively. Therefore,
\[
W_p(\hat{\pi}, \delta_{x_0} \hat{\mathbb{P}}_k^h) \leq C e^{-p \tilde{\xi}_1 k h} \quad \text{and} \quad
W_p\bigl(\delta_{x_0} \mathbb{P}_{k}, \pi\bigr) 
\leq C e^{-r\xi_2 kh},
\]
where $C$ is some constant. 
For the last term, by the strong convergence result of the stochastic theta method in \cite{WangWuDong2020}, we have
\begin{align*}
	W_p(\delta_{x_0}\mathbb{P}_k,\delta_{x_0}\hat{\mathbb{P}}_k^h)
	&\leq \mathbb{E}|X(t_k)-{X}_k|^p  \\
	&\leq \left(\mathbb{E}|X(t_k)-{X}_k|^2\right)^{p/2} \\
	&\leq Ch^{p/2}.
\end{align*}
Letting $k\to\infty$, we obtain
\begin{equation*}
	W_p(\hat{\pi},\pi)\leq Ch^{p/2}.
\end{equation*}
This completes the proof.
\end{proof}

\section{Numerical examples}\label{sec:numexp}
We present two numerical examples in this section to demonstrate the theoretical
results.
\begin{example}\label{Ex1}
Consider a scalar mean-reverting type model with super-linear coefficients
\begin{equation*}
	dX(t)=(1-2X(t)-5X^3(t))dt+X^2(t)dW(t), \quad X(0)=5.
\end{equation*}
\end{example}
We have $b(x)=1-2x-5x^3$ and $g(x)=x^2$. It is easy to verify that Assumptions \ref{assum:2.1}-\ref{assum:2.2} hold. Indeed, Assumption \ref{assum:2.1} holds with $q=3$ and $L=8$. Assumption \ref{assum:2.3} holds with $l_1=4$ and $\beta=4$. Moreover, Assumption \ref{assum:2.2} holds with $l_2=9$, $c_1=1$ and $\alpha=3$. Therefore, according
to our theorems in Section \ref{sec:results} there exists a unique invariant measure for the stochastic theta method.

One thousand sample paths are simulated with $X_0 = 5$, $h = 0.01$ and $T=100$, which are then used to construct empirical density functions at different time points. It is clear to see from the left plot in Figure \ref{fig:Example1} that the shapes of empirical density functions at $t = 0.1$, $t = 0.3$ and $t = 0.5$ are quite
different but the ones at $t = 4$ and $t = 10$ are much more similar, which indicates the existence of the invariant measure. The right plot of Figure \ref{fig:Example1} shows that, at $t=50$, the empirical density functions generated from different initial values  $-5$, $5$ and $15$ are nearly indistinguishable, which provides numerical evidence for the uniqueness of the invariant measure.

\begin{figure}[H]
\centering
\begin{subfigure}{0.49\textwidth}
	\centering
	\includegraphics[width=\textwidth]{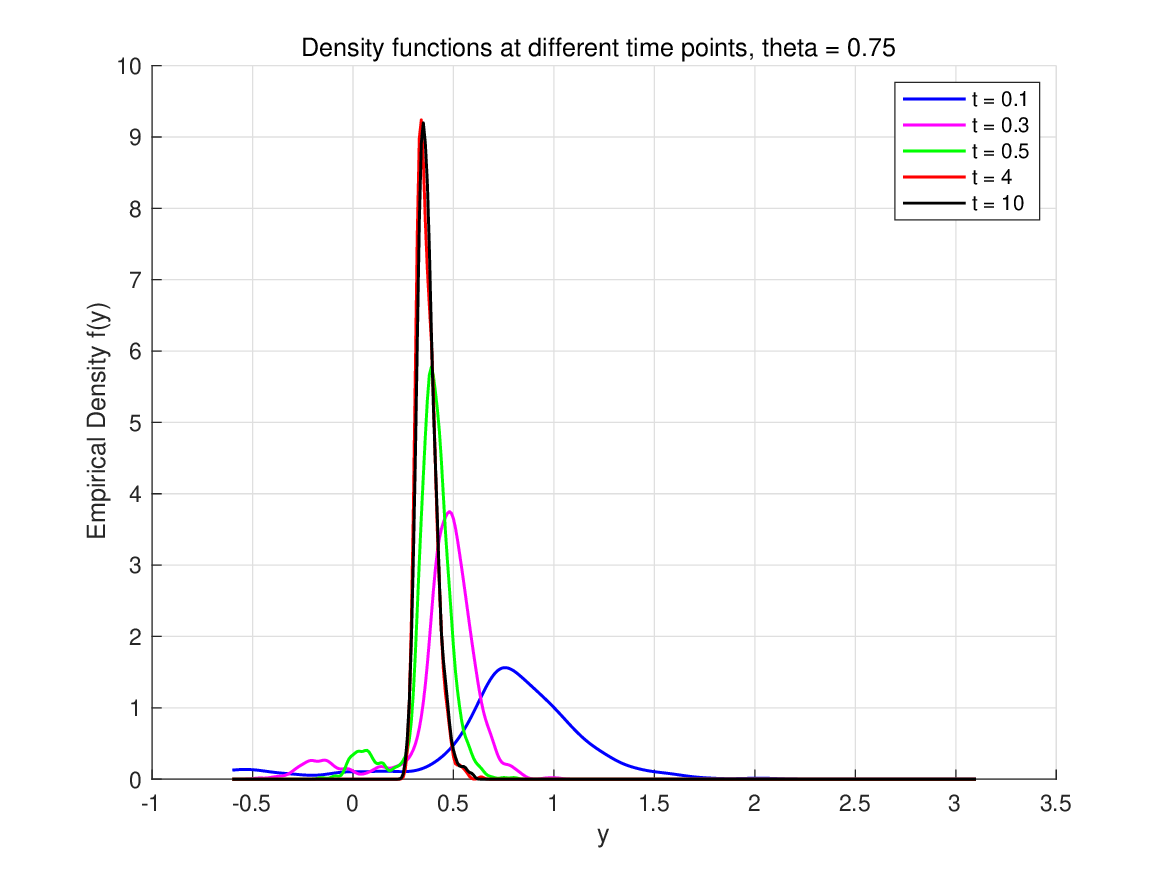}
\end{subfigure}
\hfill
\begin{subfigure}{0.49\textwidth}
	\centering
	\includegraphics[width=\textwidth]{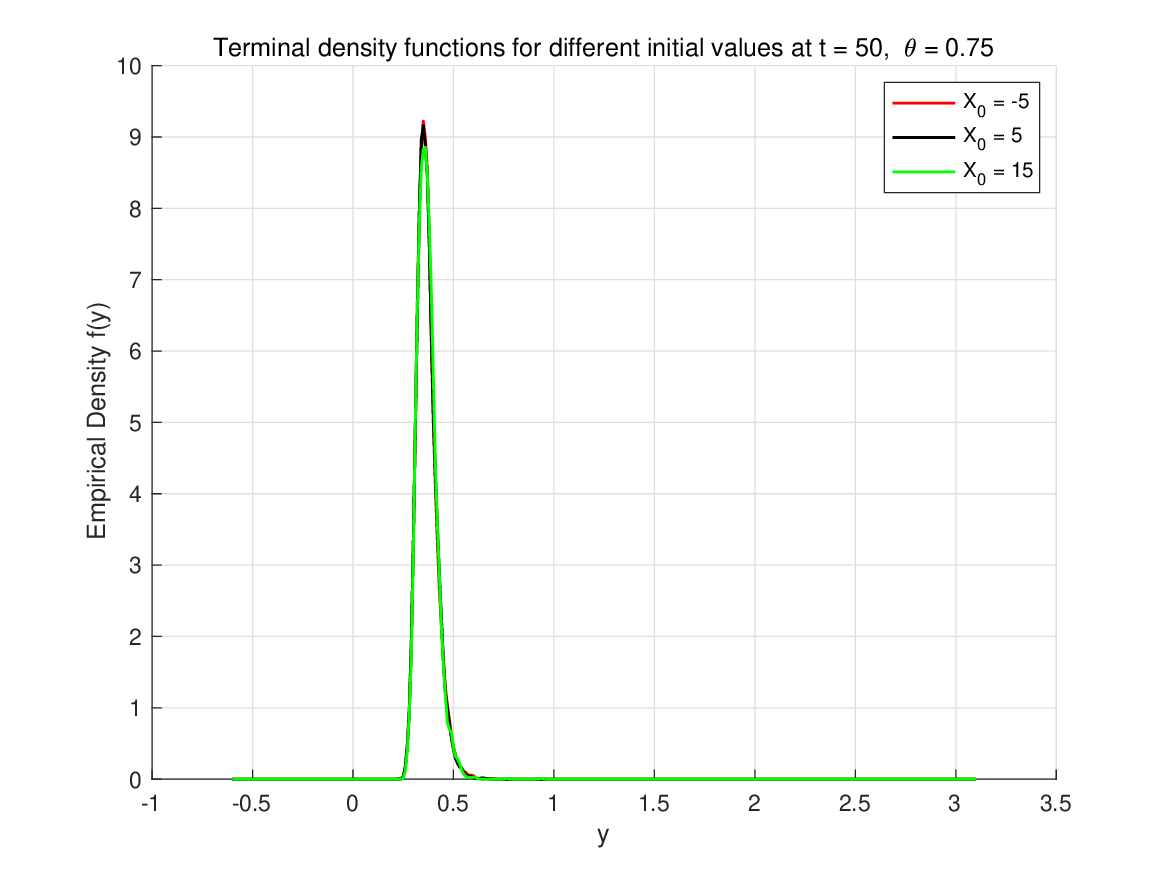}
\end{subfigure}
\caption{Left: Empirical density functions at different time points; Right: Empirical density functions at $t=50$ with different initial values}
\label{fig:Example1}
\end{figure}

\begin{figure}[H]
\centering
\includegraphics[width=0.7\linewidth]{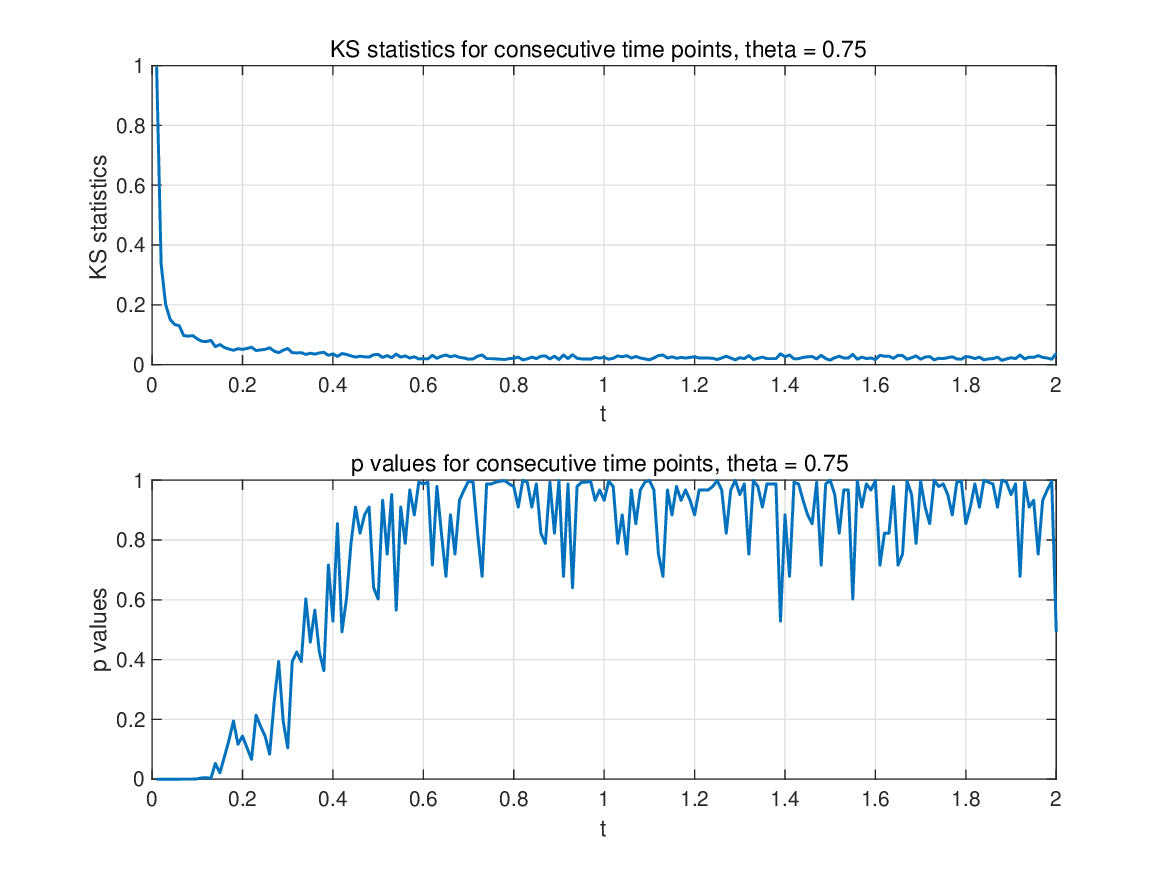}
\caption{K-S tests for samples at consecutive time points}
\label{fig:Example1-2}
\end{figure}
Figure \ref{fig:Example1-2} presents the K-S statistics and the corresponding $p$ values for the empirical distributions of the numerical solution at consecutive time points. The decay of the K-S statistics together with the increase of the $p$ values indicates that the difference between consecutive empirical distributions gradually vanishes as time evolves, which suggests the existence of an invariant measure for the numerical solution.

\begin{example}
Consider the following two dimensional SDE
\begin{align*}
	\left\{\begin{array}{ll}
		dX_1(t) = (1 - 2X_1(t) + 0.5X_2(t) - X^3_1(t))dt + 0.3X^2_1(t) dW_1(t), \\
		dX_2(t) = (1 + 0.5X_1(t) - 2X_2(t) - X^3_2(t))dt + 0.3X^2_2(t) dW_2(t),
	\end{array}\right.
\end{align*}
where $X_1(0)=X_2(0)=5$.
\end{example}
We now verify that Assumptions \ref{assum:2.1}--\ref{assum:2.2} are satisfied for this example.

Set $x=(x_1,x_2)^{\mathrm{T}}$, we have 
\begin{equation*}
b(x) =
\begin{pmatrix}
	1 - 2x_1 + 0.5x_2 - x_1^3 \\
	1 + 0.5x_1 - 2x_2 - x_2^3
\end{pmatrix},
\qquad
g(x) =
\begin{pmatrix}
	0.3x_1^2 & 0 \\
	0 & 0.3x_2^2
\end{pmatrix}.
\end{equation*}

First, we verify Assumption \ref{assum:2.1}. For any $x,y\in\mathbb{R}^2$, we have
\begin{align*}
b_1(x)-b_1(y)=-2(x_1-y_1)+0.5(x_2-y_2)-(x_1^3-y_1^3),\\
b_2(x)-b_2(y)=0.5(x_1-y_1)-2(x_2-y_2)-(x_2^3-y_2^3).
\end{align*}
Moreover,
\begin{equation*}
g(x)-g(y) =
\begin{pmatrix}
	0.3(x_1^2-y_1^2) & 0 \\
	0 & 0.3(x_2^2-y_2^2)
\end{pmatrix},
\end{equation*}
Since $x_i^3-y_i^3=(x_i-y_i)(x_i^2+x_iy_i+y_i^2)$ and $x_i^2-y_i^2=(x_i-y_i)(x_i+y_i)$, there exists a constant $C>0$ such that
\begin{equation*}
|b(x)-b(y)|\vee |g(x)-g(y)|\leq C(1+|x|^2+|y|^2)|x-y|.
\end{equation*}
Therefore, Assumption \ref{assum:2.1} holds with $q=3$.

Next, we verify Assumption \ref{assum:2.3}. Let
$\delta=x-y=(\delta_1,\delta_2)^T$, where $\delta_i=x_i-y_i$ for $i=1,2$.
By direct calculation, we have
\begin{align*}
2\langle x-y,b(x)-b(y)\rangle
=&-4\delta_1^2-4\delta_2^2+2\delta_1\delta_2  \\
&-2\delta_1^2(x_1^2+x_1y_1+y_1^2)
-2\delta_2^2(x_2^2+x_2y_2+y_2^2).
\end{align*}
Moreover,
\begin{equation*}
|g(x)-g(y)|^2
=
0.09\delta_1^2(x_1+y_1)^2
+
0.09\delta_2^2(x_2+y_2)^2.
\end{equation*}
Taking $l_1=4$, we obtain
\begin{align*}
2\langle x-y,b(x)-b(y)\rangle
+4|g(x)-g(y)|^2 =&-4\delta_1^2-4\delta_2^2+2\delta_1\delta_2  \\
&+\delta_1^2\left[-2(x_1^2+x_1y_1+y_1^2)
+0.36(x_1+y_1)^2\right]  \\
&+\delta_2^2\left[-2(x_2^2+x_2y_2+y_2^2)
+0.36(x_2+y_2)^2\right].
\end{align*}
For any $a,b\in\mathbb{R}$,
\begin{equation*}
-2(a^2+ab+b^2)+0.36(a+b)^2
=
-1.64a^2-1.28ab-1.64b^2
\leq 0.
\end{equation*}
Therefore,
\begin{equation*}
2\langle x-y,b(x)-b(y)\rangle
+4|g(x)-g(y)|^2
\leq
-4\delta_1^2-4\delta_2^2+2\delta_1\delta_2.
\end{equation*}
Using $2\delta_1\delta_2\leq \delta_1^2+\delta_2^2$, we further get
\begin{equation*}
2\langle x-y,b(x)-b(y)\rangle
+4|g(x)-g(y)|^2
\leq
-3(\delta_1^2+\delta_2^2)
=
-3|x-y|^2.
\end{equation*}
Thus, Assumption \ref{assum:2.3} holds with $l_1=4$ and $\beta=3$.

Finally, we verify Assumption \ref{assum:2.2}. For any $x\in\mathbb{R}^2$, we have
\begin{align*}
2\langle x,b(x)\rangle
=&2x_1(1-2x_1+0.5x_2-x_1^3)
+2x_2(1+0.5x_1-2x_2-x_2^3)\\
=&2x_1+2x_2-4x_1^2-4x_2^2+2x_1x_2
-2x_1^4-2x_2^4.
\end{align*}
Moreover,
\begin{equation*}
|g(x)|^2=0.09(x_1^4+x_2^4).
\end{equation*}
Taking $l_2=9$, we have
\begin{align*}
2\langle x,b(x)\rangle+9|g(x)|^2
=&2x_1+2x_2-4x_1^2-4x_2^2+2x_1x_2\\
&-1.19(x_1^4+x_2^4).
\end{align*}
Since
\begin{equation*}
2x_1+2x_2\leq x_1^2+x_2^2+2
\end{equation*}
and
\begin{equation*}
2x_1x_2\leq x_1^2+x_2^2,
\end{equation*}
it follows that
\begin{align*}
2\langle x,b(x)\rangle+9|g(x)|^2
&\leq 2-2x_1^2-2x_2^2-1.19(x_1^4+x_2^4)\\
&\leq 2-2|x|^2.
\end{align*}
Thus, Assumption \ref{assum:2.2} holds with $l_2=9$, $c_1=2$ and $\alpha=2$.

Figure \ref{fig:Example2} presents the empirical density functions of the numerical solution at $t=0.1$, $t=0.3$, $t=4$ and $t=10$. It can be seen that the shape of the empirical density changes significantly at the early stage. As time evolves, the mass of the numerical solution becomes increasingly concentrated in a bounded region, and the empirical density functions at later time points become much more stable. 

Figure \ref{fig:Example2-2} displays the terminal samples of the numerical solution at $T=10$ with $\theta=0.75$. The sample points are concentrated around a small region near the equilibrium area, and no divergent sample paths are observed. This further illustrates the long-time stability of the proposed stochastic theta method and supports the convergence of the numerical distribution to an invariant measure.

\begin{figure}[H]
\centering
\begin{subfigure}{0.49\textwidth}
	\centering
	\includegraphics[width=\textwidth]{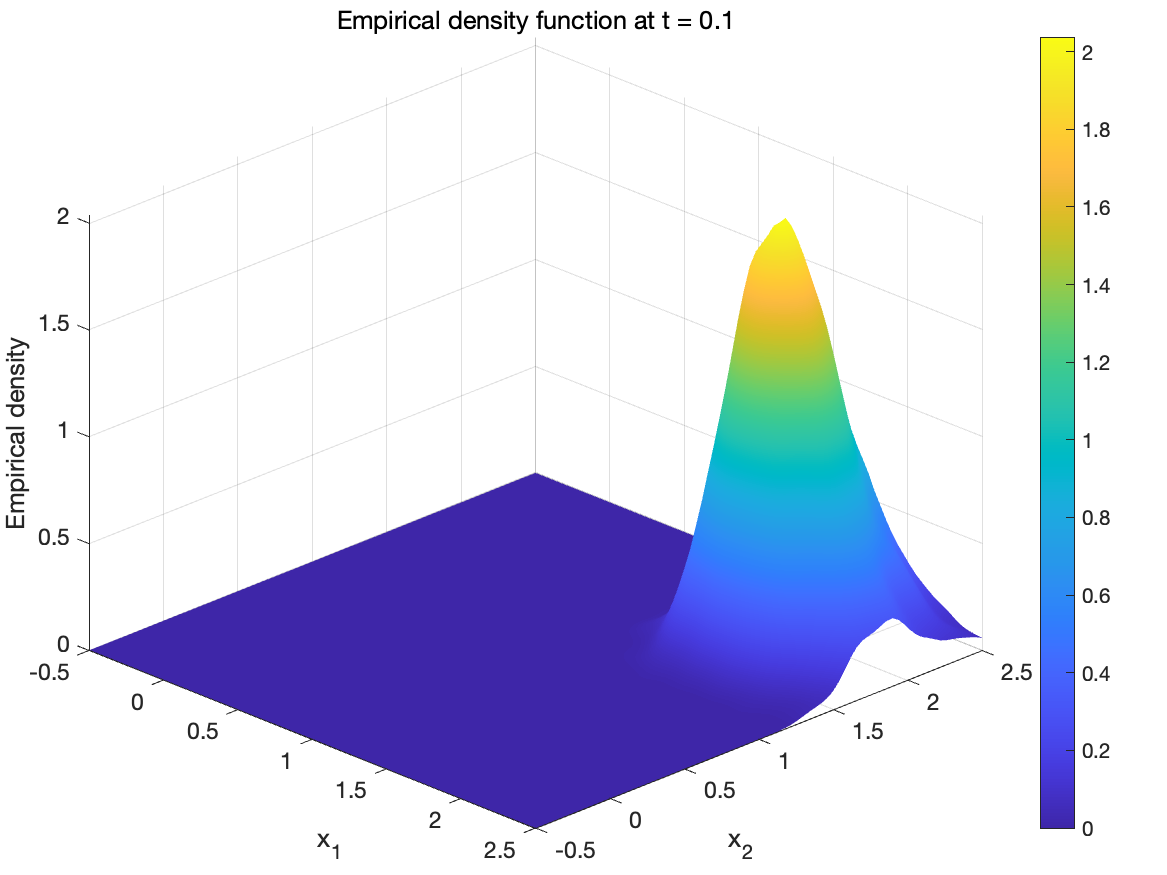}
\end{subfigure}
\hfill
\begin{subfigure}{0.49\textwidth}
	\centering
	\includegraphics[width=\textwidth]{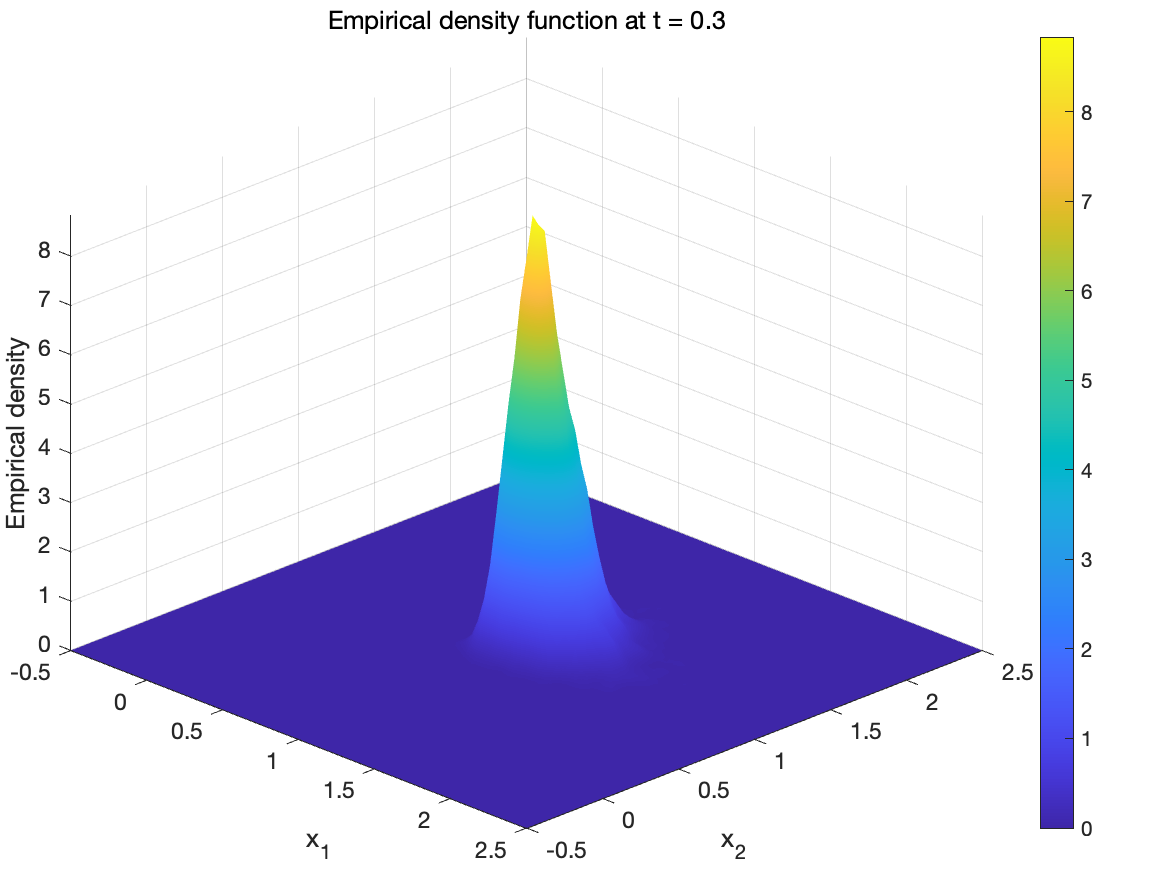}
\end{subfigure}
\hfill
\begin{subfigure}{0.49\textwidth}
	\centering
	\includegraphics[width=\textwidth]{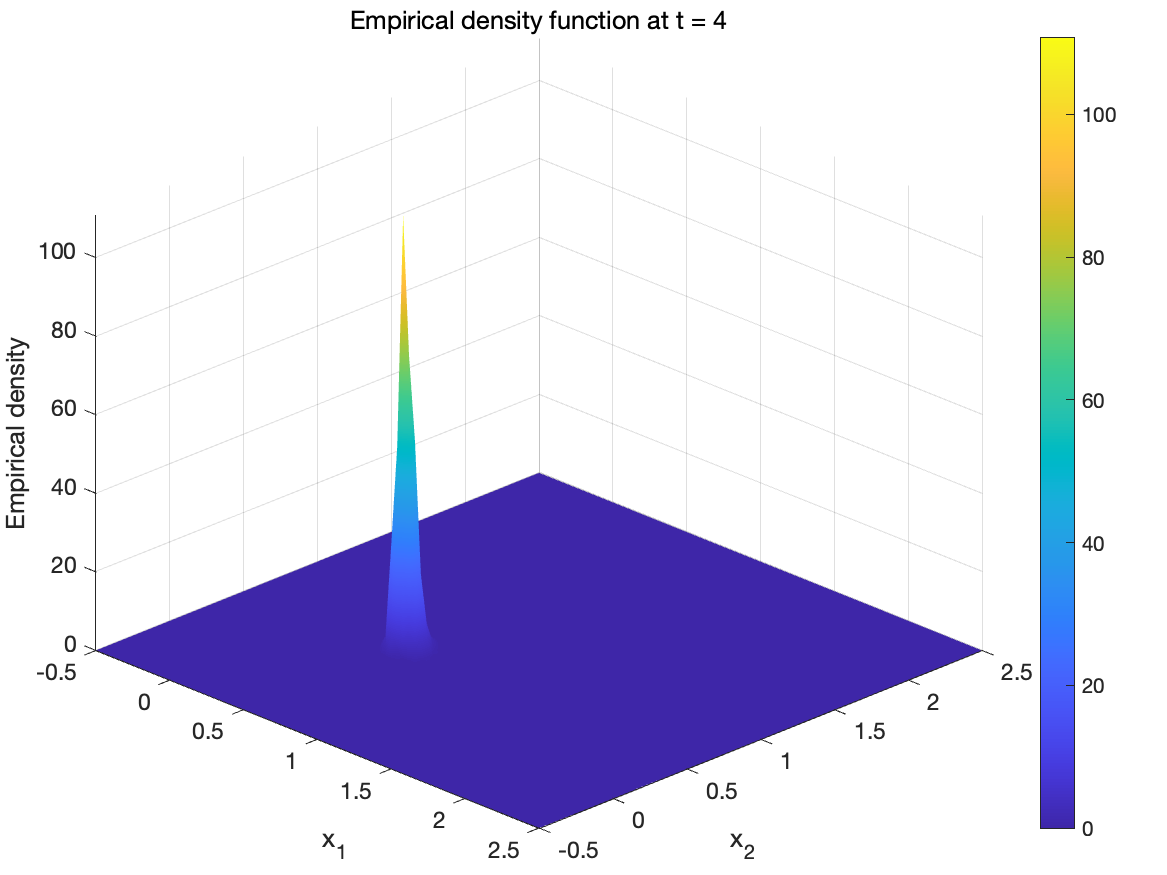}
\end{subfigure}
\hfill
\begin{subfigure}{0.49\textwidth}
	\centering
	\includegraphics[width=\textwidth]{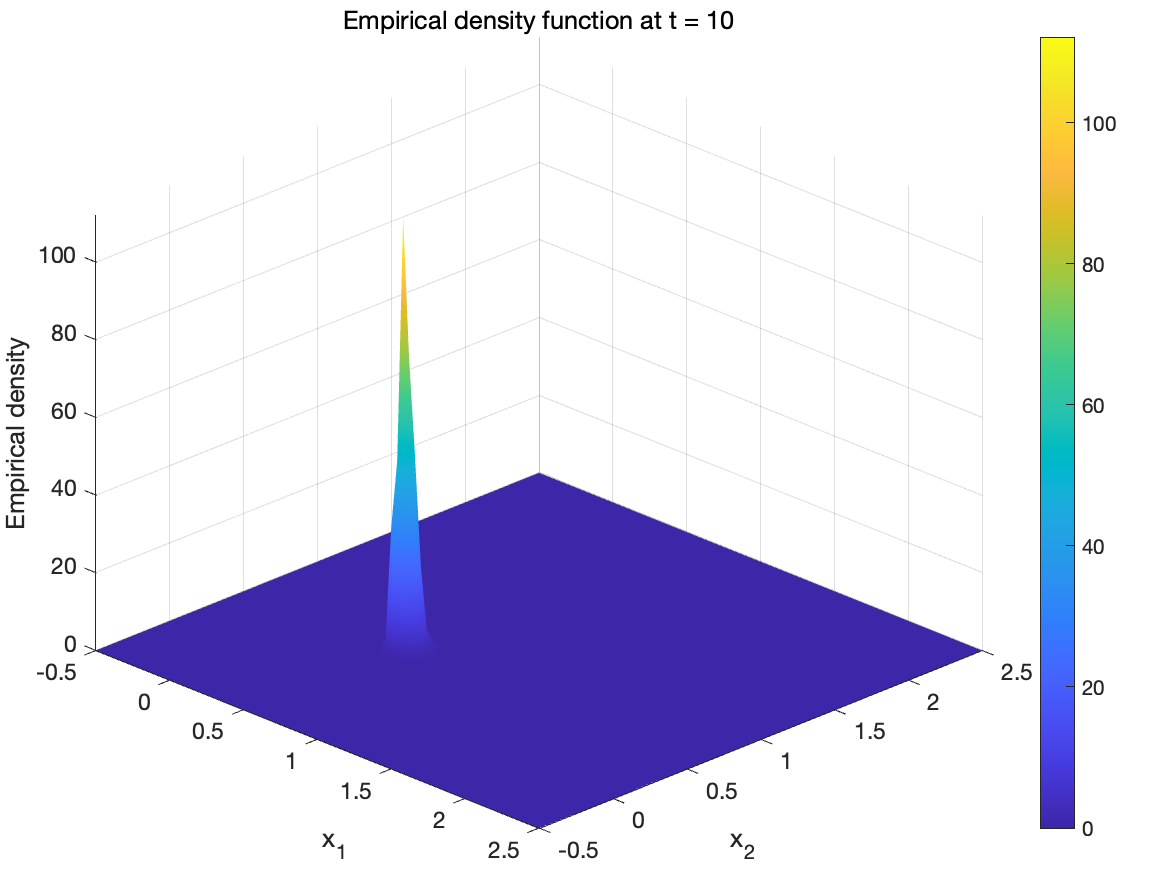}
\end{subfigure}
\caption{Empirical density function at $t=0.1$, $t=0.3$, $t=4$ and $t=10$}
\label{fig:Example2}
\end{figure}
\begin{figure}[H]
\centering
\includegraphics[width=0.7\linewidth]{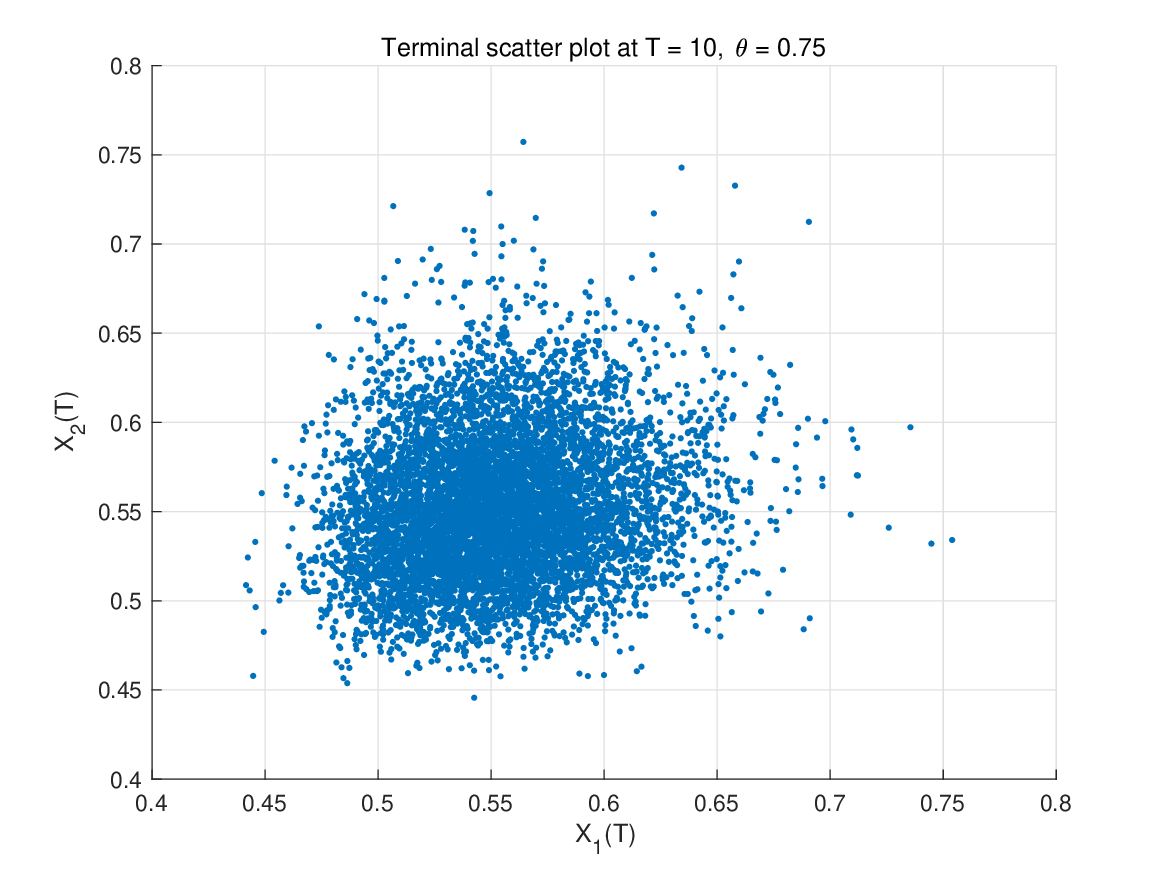}
\caption{Terminal scatter plot at $T=10$}
\label{fig:Example2-2}
\end{figure}

\section{Conclusions, discussions and future research}\label{sec:conclusion}
In this paper, the stochastic theta method is used to approximate the invariant measure of some SDEs, whose drift and diffusion coefficients are allowed to contain super-linear terms. The existence and uniqueness of the numerical invariant measure of the numerical solution generated by the stochastic theta method is proved. And the convergence of the numerical invariant measure to the exact invariant measure of the underlying SDEs is also proved.
\par
As the second moments are used, the coefficient of the linear term in the drift part has to be negative (the $-\alpha$ in Assumption \ref{assum:2.2}). Meanwhile, it is well known that if some $p$th moments are used for some small $p\in(0,1)$, one may still get the similar results \cite{LiMaYangYuan2018}. Therefore, one of the potential future research is to release the requirement in Assumption \ref{assum:2.2}.
\par
Another very promising future research is to investigate the stochastic theta method for some SDEs with state-dependent switching, as numerical invariant measures of this type of SDEs could help with sampling from finite mixture distributions, which then are quite useful in various machine learning topics \cite{Tretyakov2025}.

\bibliographystyle{abbrv}
\bibliography{myref}  

@book {Allen2007,
    AUTHOR = {Allen, E.},
     TITLE = {Modeling with {I}t\^o{} stochastic differential equations},
    SERIES = {Mathematical Modelling: Theory and Applications},
    VOLUME = {22},
 PUBLISHER = {Springer, Dordrecht},
      YEAR = {2007},
     PAGES = {xii+228},
      ISBN = {978-1-4020-5952-0},
   MRCLASS = {60-02 (60H10 60H35 65C30)},
  MRNUMBER = {2292765},
}

@book {Allen2015,
    AUTHOR = {Allen, Linda J. S.},
     TITLE = {Stochastic population and epidemic models},
    SERIES = {Mathematical Biosciences Institute Lecture Series. Stochastics
              in Biological Systems},
    VOLUME = {1.3},
      NOTE = {Persistence and extinction},
 PUBLISHER = {Springer, Cham; MBI Mathematical Biosciences Institute, Ohio
              State University, Columbus, OH},
      YEAR = {2015},
     PAGES = {x+47},
      ISBN = {978-3-319-21553-2; 978-3-319-21554-9},
   MRCLASS = {92D30},
  MRNUMBER = {3381295},
       DOI = {10.1007/978-3-319-21554-9},
}

@article {BaoShaoYuan2016,
    AUTHOR = {Bao, Jianhai and Shao, Jinghai and Yuan, Chenggui},
     TITLE = {Approximation of invariant measures for regime-switching
              diffusions},
   JOURNAL = {Potential Anal.},
  FJOURNAL = {Potential Analysis. An International Journal Devoted to the
              Interactions between Potential Theory, Probability Theory,
              Geometry and Functional Analysis},
    VOLUME = {44},
      YEAR = {2016},
    NUMBER = {4},
     PAGES = {707--727},
      ISSN = {0926-2601,1572-929X},
   MRCLASS = {60H10 (60H35 60J27 60J60)},
  MRNUMBER = {3490546},
MRREVIEWER = {Xiaoyue\ Li},
       DOI = {10.1007/s11118-015-9526-x},
}

@ARTICLE{CaoTanGaoXuChenHangLi2024,
  author={Cao, Hanqun and Tan, Cheng and Gao, Zhangyang and Xu, Yilun and Chen, Guangyong and Heng, Pheng-Ann and Li, Stan Z.},
  JOURNAL = {IEEE Trans. Knowl. Data Eng.},
  fjournal={IEEE Transactions on Knowledge and Data Engineering}, 
  title={A Survey on Generative Diffusion Models}, 
  year={2024},
  volume={36},
  number={7},
  pages={2814-2830},
  doi={10.1109/TKDE.2024.3361474}
}

@article {ChenCaoChen2025,
    AUTHOR = {Chen, Ziheng and Cao, Liangmin and Chen, Lin},
     TITLE = {Stochastic theta methods for random periodic solution of
              stochastic differential equations under non-globally
              {L}ipschitz conditions},
   JOURNAL = {Numer. Algorithms},
  FJOURNAL = {Numerical Algorithms},
    VOLUME = {99},
      YEAR = {2025},
    NUMBER = {2},
     PAGES = {651--681},
      ISSN = {1017-1398,1572-9265},
   MRCLASS = {65C30 (60H10 60H35)},
  MRNUMBER = {4903819},
MRREVIEWER = {Lihai\ Ji},
       DOI = {10.1007/s11075-024-01892-y},
}

@article {ChenLiuWu2026,
    AUTHOR = {Chen, Ziheng and Liu, Jiao and Wu, Anxin},
     TITLE = {Strong and weak convergence orders of numerical methods for
              {SDE}s driven by time-changed {L}\'evy noise},
   JOURNAL = {Appl. Numer. Math.},
  FJOURNAL = {Applied Numerical Mathematics. An IMACS Journal},
    VOLUME = {221},
      YEAR = {2026},
     PAGES = {46--62},
      ISSN = {0168-9274,1873-5460},
   MRCLASS = {65C30 (60G51 60H10 60H35)},
  MRNUMBER = {4993437},
       DOI = {10.1016/j.apnum.2025.11.007},
}

@article {Higham2000,
    AUTHOR = {Higham, Desmond J.},
     TITLE = {Mean-square and asymptotic stability of the stochastic theta
              method},
   JOURNAL = {SIAM J. Numer. Anal.},
  FJOURNAL = {SIAM Journal on Numerical Analysis},
    VOLUME = {38},
      YEAR = {2000},
    NUMBER = {3},
     PAGES = {753--769},
      ISSN = {0036-1429,1095-7170},
   MRCLASS = {60H10 (65C50)},
  MRNUMBER = {1781202},
       DOI = {10.1137/S003614299834736X},
}

@article {HighamMaoYuan2007,
    AUTHOR = {Higham, Desmond J. and Mao, Xuerong and Yuan, Chenggui},
     TITLE = {Preserving exponential mean-square stability in the simulation
              of hybrid stochastic differential equations},
   JOURNAL = {Numer. Math.},
  FJOURNAL = {Numerische Mathematik},
    VOLUME = {108},
      YEAR = {2007},
    NUMBER = {2},
     PAGES = {295--325},
      ISSN = {0029-599X,0945-3245},
   MRCLASS = {60H35 (60H10 65C30)},
  MRNUMBER = {2358006},
MRREVIEWER = {Henri\ Schurz},
       DOI = {10.1007/s00211-007-0113-y},
}

@article {Huang2014,
    AUTHOR = {Huang, Chengming},
     TITLE = {Mean square stability and dissipativity of two classes of
              theta methods for systems of stochastic delay differential
              equations},
   JOURNAL = {J. Comput. Appl. Math.},
  FJOURNAL = {Journal of Computational and Applied Mathematics},
    VOLUME = {259},
      YEAR = {2014},
     PAGES = {77--86},
      ISSN = {0377-0427,1879-1778},
   MRCLASS = {65C30},
  MRNUMBER = {3123472},
MRREVIEWER = {Gabriela\ Mircea},
       DOI = {10.1016/j.cam.2013.03.038},
}

@article {JiangWengLiu2020,
    AUTHOR = {Jiang, Yanan and Weng, Lihui and Liu, Wei},
     TITLE = {Stationary distribution of the stochastic theta method for
              nonlinear stochastic differential equations},
   JOURNAL = {Numer. Algorithms},
  FJOURNAL = {Numerical Algorithms},
    VOLUME = {83},
      YEAR = {2020},
    NUMBER = {4},
     PAGES = {1531--1553},
      ISSN = {1017-1398,1572-9265},
   MRCLASS = {65C30 (60H10)},
  MRNUMBER = {4088603},
MRREVIEWER = {Roman\ N.\ Makarov},
       DOI = {10.1007/s11075-019-00735-5},
}

@article {LiHuHuangWang2023,
    AUTHOR = {Li, Min and Hu, Yaozhong and Huang, Chengming and Wang, Xiong},
     TITLE = {Mean square stability of stochastic theta method for
              stochastic differential equations driven by fractional
              {B}rownian motion},
   JOURNAL = {J. Comput. Appl. Math.},
  FJOURNAL = {Journal of Computational and Applied Mathematics},
    VOLUME = {420},
      YEAR = {2023},
     PAGES = {Paper No. 114804, 24},
      ISSN = {0377-0427,1879-1778},
   MRCLASS = {65C30 (34F05 60G22 60H10 65L20)},
  MRNUMBER = {4488084},
MRREVIEWER = {Jialin\ Hong},
       DOI = {10.1016/j.cam.2022.114804},
}

@article {LiGan2012,
    AUTHOR = {Li, Qiyong and Gan, Siqing},
     TITLE = {Mean-square exponential stability of stochastic theta methods
              for nonlinear stochastic delay integro-differential equations},
   JOURNAL = {J. Appl. Math. Comput.},
  FJOURNAL = {Journal of Applied Mathematics and Computing},
    VOLUME = {39},
      YEAR = {2012},
    NUMBER = {1-2},
     PAGES = {69--87},
      ISSN = {1598-5865,1865-2085},
   MRCLASS = {65C30 (34K20 34K28 60H35 65L20)},
  MRNUMBER = {2914463},
       DOI = {10.1007/s12190-011-0510-3},
}

@article {LiMaYangYuan2018,
    AUTHOR = {Li, Xiaoyue and Ma, Qianlin and Yang, Hongfu and Yuan,
              Chenggui},
     TITLE = {The numerical invariant measure of stochastic differential
              equations with {M}arkovian switching},
   JOURNAL = {SIAM J. Numer. Anal.},
  FJOURNAL = {SIAM Journal on Numerical Analysis},
    VOLUME = {56},
      YEAR = {2018},
    NUMBER = {3},
     PAGES = {1435--1455},
      ISSN = {0036-1429,1095-7170},
   MRCLASS = {65C30 (34F05 60H10)},
  MRNUMBER = {3805852},
MRREVIEWER = {Victor\ B.\ Malyutin},
       DOI = {10.1137/17M1143927},
}

@article {LiMaoYin2019,
    AUTHOR = {Li, Xiaoyue and Mao, Xuerong and Yin, George},
     TITLE = {Explicit numerical approximations for stochastic differential
              equations in finite and infinite horizons: truncation methods,
              convergence in {$p$}th moment and stability},
   JOURNAL = {IMA J. Numer. Anal.},
  FJOURNAL = {IMA Journal of Numerical Analysis},
    VOLUME = {39},
      YEAR = {2019},
    NUMBER = {2},
     PAGES = {847--892},
      ISSN = {0272-4979,1464-3642},
   MRCLASS = {65C30 (60H10 60H35)},
  MRNUMBER = {3941887},
MRREVIEWER = {Roger\ Pettersson},
       DOI = {10.1093/imanum/dry015},
}

@article {LiuDengZhu2018,
    AUTHOR = {Liu, Linna and Deng, Feiqi and Zhu, Quanxin},
     TITLE = {Mean square stability of two classes of theta methods for
              numerical computation and simulation of delayed stochastic
              {H}opfield neural networks},
   JOURNAL = {J. Comput. Appl. Math.},
  FJOURNAL = {Journal of Computational and Applied Mathematics},
    VOLUME = {343},
      YEAR = {2018},
     PAGES = {428--447},
      ISSN = {0377-0427,1879-1778},
   MRCLASS = {65C30 (60H30 60H35)},
  MRNUMBER = {3813561},
MRREVIEWER = {John\ Masson\ Noble},
       DOI = {10.1016/j.cam.2018.04.018},
}

@article {LiuMaoWu2023,
    AUTHOR = {Liu, Wei and Mao, Xuerong and Wu, Yue},
     TITLE = {The backward {E}uler-{M}aruyama method for invariant measures
              of stochastic differential equations with super-linear
              coefficients},
   JOURNAL = {Appl. Numer. Math.},
  FJOURNAL = {Applied Numerical Mathematics. An IMACS Journal},
    VOLUME = {184},
      YEAR = {2023},
     PAGES = {137--150},
      ISSN = {0168-9274,1873-5460},
   MRCLASS = {65C30 (60H35)},
  MRNUMBER = {4499301},
MRREVIEWER = {Yunzhang\ Li},
       DOI = {10.1016/j.apnum.2022.09.017},
}

@article {LiuLiu2025,
    AUTHOR = {Liu, Zhihui and Liu, Zhizhou},
     TITLE = {Numerical unique ergodicity of monotone {SDE}s driven by
              nondegenerate multiplicative noise},
   JOURNAL = {J. Sci. Comput.},
  FJOURNAL = {Journal of Scientific Computing},
    VOLUME = {103},
      YEAR = {2025},
    NUMBER = {3},
     PAGES = {Paper No. 87, 21},
      ISSN = {0885-7474,1573-7691},
   MRCLASS = {65C30 (60H15 60H35 65M60)},
  MRNUMBER = {4899897},
       DOI = {10.1007/s10915-025-02902-4},
}

@book{Mao2007,
  title={Stochastic {D}ifferential {E}quations and {A}pplications},
  author={Mao, Xuerong},
  year={2007},
  edition={2nd},
  publisher={Elsevier}
}

@article{NiuWeiYinZeng2025,
    author = {Niu, Yuanling and Wei, Jiaxin and Yin, Zhi and Zeng, Dan},
    title = {Stochastic theta methods for free stochastic differential equations},
    journal = {IMA J. Numer. Anal.},
   FJOURNAL = {IMA Journal of Numerical Analysis},
    VOLUME = {online first},
    year = {2025},
    issn = {0272-4979},
    doi = {10.1093/imanum/draf044},
}

@article {PangWangWu2024,
    AUTHOR = {Pang, Chenxu and Wang, Xiaojie and Wu, Yue},
     TITLE = {Linear implicit approximations of invariant measures of
              semi-linear {SDE}s with non-globally {L}ipschitz coefficients},
   JOURNAL = {J. Complexity},
  FJOURNAL = {Journal of Complexity},
    VOLUME = {83},
      YEAR = {2024},
     PAGES = {Paper No. 101842, 45},
      ISSN = {0885-064X,1090-2708},
   MRCLASS = {60H35 (37M25 65C30)},
  MRNUMBER = {4721566},
       DOI = {10.1016/j.jco.2024.101842},
}

@inproceedings{Songetal2021,
  title={Score-based generative modeling through stochastic differential equations},
  author={Song, Yang and Sohl-Dickstein, Jascha and Kingma, Durk P and Kumar, Abhishek and Ermon, Stefano and Poole, Ben},
  booktitle={International Conference on Learning Representations},
  year={2021}
}

@article{Tretyakov2025,
	AUTHOR = {Tretyakov, Michael V.},
	TITLE = {Sampling from mixture distributions based on regime-switching diffusions},
    JOURNAL = {SIAM J. Sci. Comput.},
	FJOURNAL = {SIAM Journal on Scientific Computing},
	VOLUME = {47},
    YEAR = {2025},
	NUMBER = {3},
	PAGES = {A1681--A1701},
	DOI = {10.1137/24M1677113},
}

@article {WangWuDong2020,
    AUTHOR = {Wang, Xiaojie and Wu, Jiayi and Dong, Bozhang},
     TITLE = {Mean-square convergence rates of stochastic theta methods for
              {SDE}s under a coupled monotonicity condition},
   JOURNAL = {BIT},
  FJOURNAL = {BIT. Numerical Mathematics},
    VOLUME = {60},
      YEAR = {2020},
    NUMBER = {3},
     PAGES = {759--790},
      ISSN = {0006-3835,1572-9125},
   MRCLASS = {65C30 (60H10 60H35)},
  MRNUMBER = {4132905},
MRREVIEWER = {Victor\ B.\ Malyutin},
       DOI = {10.1007/s10543-019-00793-0},
}

@article {WangChenNiuNiu2023,
    AUTHOR = {Wang, Yiling and Chen, Ziheng and Niu, Mengyao and Niu,
              Yuanling},
     TITLE = {Mean-square convergence and stability of compensated
              stochastic theta methods for jump-diffusion {SDE}s with
              super-linearly growing coefficients},
   JOURNAL = {Calcolo},
  FJOURNAL = {Calcolo. A Quarterly on Numerical Analysis and Theory of
              Computation},
    VOLUME = {60},
      YEAR = {2023},
    NUMBER = {2},
     PAGES = {Paper No. 30, 32},
      ISSN = {0008-0624,1126-5434},
   MRCLASS = {60H35 (65C20 65C30)},
  MRNUMBER = {4589707},
       DOI = {10.1007/s10092-023-00524-6},
}

@article {YuanZhu2026,
    AUTHOR = {Yuan, Haiyan and Zhu, Quanxin},
     TITLE = {Efficient stability-preserving schemes for stochastic
              {M}c{K}ean-{V}lasov equations with uncertainty},
   JOURNAL = {J. Comput. Appl. Math.},
  FJOURNAL = {Journal of Computational and Applied Mathematics},
    VOLUME = {483},
      YEAR = {2026},
     PAGES = {Paper No. 117361, 22},
      ISSN = {0377-0427,1879-1778},
   MRCLASS = {65C30 (60H10)},
  MRNUMBER = {5020993},
       DOI = {10.1016/j.cam.2026.117361},
}

\end{document}